\newtheorem{theorem}{Theorem}
\newtheorem{proposition}[theorem]{Proposition}
\newtheorem{lemma}[theorem]{Lemma}
\newtheorem{definition}{Definition}
\newtheorem{remark}{Remark}
\DeclareMathOperator{\supp}{supp}
\begin{document}

\title{Rademacher complexity for Markov chains : Applications to kernel smoothing and Metropolis-Hasting}
\author{Patrice Bertail, Modal'X, UPL, Universit\'{e} Paris-Nanterre\\
\ \\
 Fran\c{c}ois Portier, T\'el\'ecom ParisTech, Universit\'e Paris-Saclay}


\maketitle

\begin{abstract}

Following the seminal approach by Talagrand, the concept of Rademacher complexity for independent sequences of random variables is extended to Markov chains. The proposed notion of ``block Rademacher complexity'' (of a class of functions) follows from renewal theory and allows to control the expected values of suprema (over the class of functions) of empirical processes based on Harris Markov chains as well as the excess probability. For classes of Vapnik-Chervonenkis type, bounds on the ``block Rademacher complexity'' are established. These bounds depend essentially on the sample size and the probability tails of the regeneration times. The proposed approach is employed to obtain convergence rates for the kernel density estimator of the stationary measure and to derive concentration inequalities for the Metropolis-Hasting algorithm.

\end{abstract}

\noindent \textbf{Keywords} : Markov chains; Concentration inequalities; Rademacher complexity; Kernel smoothing; Metropolis Hasting.


\section{Introduction}

Let $(\Omega, \mathcal F, \mathbb P)$ be a probability space and suppose that $X = (X_i)_{i\in \mathbb N}$ is a sequence of random variables on $(\Omega, \mathcal F, \mathbb P)$ valued in $(E,\mathcal E)$.   Let $\mathcal F$ denote a countable class of real-valued measurable functions defined on $E$. Let $n\in \mathbb N$, define
\begin{align*}
Z =  \sup_{f\in \mathcal F}\left| \sum_{i=1}^n (f(X_i) - \mathbb E [f(X_i)] )  \right| .
\end{align*} 
The random variable $Z$ plays a crucial role in machine learning and statistics: it can be used to bound the risk of an algorithm \citep{vapnik:1998} as well as to study \textit{M and Z estimators} \citep{vandervaart:1998}; it serves to describe the (uniform) accuracy of function estimates such as the cumulative distribution function \citep{shorack+w:2009,boucheron+l+m:2013} or \textit{kernel smoothing estimates} of the probability density function \citep{einmas0,gine+g:02}; in addition, kernel density estimators, as well as their variations,
\textit{Nadaraya-Watson} estimators, are at the core of many
semi-parametric statistical procedures \citep{akritas+v:2001,portier:2016} in which controlling $Z$-type quantities permits to take advantage of the tightness of the empirical process \citep{van+w:2007}.
Depending on the class $\mathcal F$ many different bounds are known when $X$ forms an independent and identically distributed (i.i.d.) sequence of random variables. A complete picture is given in \cite{wellner1996,boucheron+l+m:2013}.

The purpose of the paper is to study the behavior of $Z$ when $X$ is a Markov chain. The approach taken in this paper is based on renewal theory and is known as the
\textit{regenerative method}, see \cite{Smith55,nummelin:1978,athreya+n:1978}. Indeed it is well known that sample paths
of a Harris chain may be divided into i.i.d. \textit{regeneration
blocks}. These blocks are defined as data cycles between random times called \textit{regeneration times} at which the
chain forgets its past. Hence, many
results established in the i.i.d. setup may be extended to the Markovian
framework by applying the latter to (functionals of) the regeneration blocks. 
Refer to \cite{meyn+t:2009} for the \textit{strong law of large numbers} and the
\textit{central limit theorem}, to \cite{levental:1988} for functional CLT, as well as \cite{Bolthausen80, Malinovskii87,
Malinovskii89, BerClemPTRF,bednorz+l+l:2008,douc+m:2008} for refinements of the central limit theorem and \cite{adamczak:2008,BerClemTPA,bertail+c:2017} for exponential type bounds.

Other works dealing with concentration inequalities for Markov chains include, among others, \cite{joulin+o:2010}, where a concentration inequality is proved under a curvature assumption; \cite{dedecker+g:2015}, where the technique of \textit{bounded differences} is employed to derive an Hoeffding-type inequality; \cite{wintenberger:2017} which extends the previous to the case of unbounded chains. 

We introduce a new notion of complexity that we call the \textit{block Rademacher complexity} which extends the classical Rademacher complexity associated for  independent sequences of random variables \citep{boucheron+l+m:2013} to Markov chains. As in the independent case, the \textit{block Rademacher complexity} is useful to bound the expected values of empirical processes (over some classes of functions) and intervenes as well to control the excess probability. Depending on the probability tails of the regeneration times, which are considered to be either exponential or polynomial, we derive bounds on the block Rademacher complexity of Vapnik-Chervonenkis (VC) types of classes. Interestingly, the obtained bounds bears resemblance to the ones provided in \cite{einmas0,gigui2001} (for independent $X$) as they depend on the variance of the underlying class of functions $\mathcal F$ allowing to take advantage of classes $\mathcal F$ having small fluctuations.

To demonstrate the usefulness and the generality of the proposed approach, we apply our results on $2$ different problems. The first one tackles uniform bounds for the \textit{kernel estimator} of the stationary density and illustrates how to handle particular classes of functions having variance that decreases with the sample size. The second problem deals with the popular \textit{Metropolis Hasting} algorithm which furnishes examples of Markov chains that fit our framework.

\paragraph{Kernel density estimator.} The asymptotic properties of kernel density estimators, based on independent
and identically distributed data, are well understood since the
seventies-eighties \citep{stute:1982}. However finite sample properties were
only studied in the beginning of the century when the studies of empirical processes over VC class have been proved to be powerful to handle such kernel density estimates \citep{einmas0,gine+g:02}. The functions class of interest in this problem is given by 
\begin{align*}
\mathcal K _n = \{ x\mapsto K((x-y)/h_n)\,:\, y\in\mathbb R^d  \},
\end{align*}
where $K:\mathbb R^d \to \mathbb R$ is called the kernel and $(h_n)_{n\in\mathbb N}$ is a positive sequence converging to $0$ called the bandwidth. Based on the property that $\mathcal K _n$ is included on some VC class \citep{nolan+p:1987},  some master results have been obtained
by \cite{einmas0,einmas,gigui2001,gine+g:02} who proved some concentration inequalities,
based on the seminal work of \cite{talagrand:1996}, allowing to establish
precisely the rate of uniform convergence of kernel density estimators. Kernel density estimates are particular because the variance of each element in $\mathcal K_n$ goes to $0$ as $n\to \infty$. This needs to be considered to derive accurate bounds, e.g., the one presented in \cite{gine+g:02}. The proposed approach takes care of this phenomenon as, under reasonable conditions, our bound for Markov chains scales at the same rate as the ones obtained in the independent case. Note that our results extend the ones in \cite{azais:2016} where under similar assumptions the consistency is established.

The study of this specific class of statistics for dependent data has only recently received special attention in the statistical
literature. To the best of our knowledge, uniform results are limited to the alpha and beta mixing cases when dependency occurs  \citep{Peligrad:1992,hansen:2008} by using \textit{coupling
techniques}. 

\paragraph{Metropolis-Hasting algorithm.}
 
Metropolis-Hasting (MH) algorithm is one of the state of the art method in computational statistics and is frequently used to compute Bayesian estimators \citep{robert:2004}. Theoretical results for MH are often deduced from the analysis of geometrically ergodic Markov chains as presented for instance in \cite{mengersen+t:1996,roberts+t:1996,jarner+h:2000,robros:2004,douc+m+r:2004}. Whereas many results on the asymptotic behavior of MH are known, e.g., central limit theorem or convergence in total variation, only few non-asymptotic results are available for such Markov chains; see for instance \citep{latuszynski+m+n:2013} where the estimation error is controlled \textit{via} a Rosenthal-type inequality. We consider the popular \textit{random walk MH}, which is at the heart of the adaptive MH version introduced in \cite{haario+s+t:2001}. Building upon the pioneer works \cite{roberts+t:1996,jarner+h:2000} where the geometric ergodicity is established for the random walk MH, we show that whenever the class is VC the expected values of the sum over $n$ points of the chain is bounded by $D\sqrt{n (1\vee \log(\log(n)))}$ where $D>0$ depends notably on the distribution of the regeneration times. By further applying this to the quantile function, we obtain a concentration inequality for Bayesian credible intervals.

\paragraph{Outline.}

The paper is organized as follows. In section \ref{sec:back}, the notations and
main assumptions are first set out. Conceptual background related to the
renewal properties of Harris chains and the regenerative method are also
briefly exposed.  In section \ref{sec:Rad_markov}, the notion of
block Rademacher complexity for Markov chains is introduced as well as the notion of VC classes. Section \ref{sec:main_results} provides the main result of the paper : a bound on the Rademacher compexity. Our
methodology is illustrated in section \ref{sec:applications} on kernel density estimation and MH. Technical proofs are postponed to the Appendix.

\section{Regenerative Markov chains}\label{sec:back}

\subsection{Basic definitions}

In this section, for seek of completeness we recall
the following important basic definitions and properties of regenerative
Markov chains. An interested reader may look into \cite{nummelin:1984} or \cite{meyn+t:2009} for
detailed survey of regeneration theory.

Consider an homogeneous Markov chain $X=(X_{n})_{n\in\mathbb{N}}$ on~a~countably generated state
space~$(E,\mathcal{E})$ with transition probability~$P(.,.),$ initial
probability~$\nu$. The assumption that
$\mathcal{E}$ is countably generated allows to avoid measurability
problems.  For any $n\geq 1$, let $P^{n}$ denote the $n$-th iterate of~the~transition probability~$P$. 

\begin{definition}[irreducibility]
The chain is $\psi$-\textit{irreducible} if there exists a $\sigma$-finite
measure $\psi$ such that, for all set $B\in\mathcal{E}$, when $\psi(B)>0$, for any $x\in E$ there exists $n>0$ such that $P^n(x,B)>0$. With words, no matter the starting
point is, the chain visits $B$ with strictly positive probability. 
\end{definition}

\begin{definition}[aperiodicity]
 Assuming $\psi$-irreducibility, there exists $d^{\prime}\in\mathbb{N}^{\ast
}$ and disjoints sets $D_{1},....,$ $D_{d^{\prime}}$ (set $D_{d^{\prime}+1}=D_{1}%
$) positively weighted by $\psi$ such that $\psi(E\backslash\cup_{1\leqslant i\leqslant
d^{\prime}}D_{i})=0$ and $\forall x\in D_{i},$ $P(x,D_{i+1})=1.$ The
\textit{period} of the chain is the g.c.d. $d$ of such integers, it is said to be
\textit{aperiodic} if $d=1$. 
\end{definition}

\begin{definition}[Harris recurrence] Given a set $B \in E$ and $\tau_{B}$ the time the chain first enters $B$, a $\psi$-irreducible Markov chain is said to be \textit{positive Harris recurrent} if for all
$B \in E$ with $\psi(B) > 0$,  we have $\mathbb{E}_{x}\tau_{B} < \infty $ for all $x \in B$.
\end{definition}

Recall that a chain is positive Harris recurrent and aperiodic if and only if it is ergodic \citep[Proposition 6.3]{nummelin:1984}, i.e., there exists a probability measure $\pi$, called the stationary distribution, such that $\lim_{n\to +\infty} \|P^n(x,\cdot) - \pi \|_{\text{tv}}  = 0$. The Nummelin splitting technique (presented in the forthcoming section) depends heavily on the notion of small set. Such sets exist for positive Harris recurrent chain \citep{JainJam}.

\begin{definition}[small sets]
A~set~$S\in\mathcal{E}$ is said to be $\Psi$-small if there exists
$\delta>0,$ a~positive probability measure~$\Psi$ supported by~$S$ and
an~integer $m\in \mathbb N^{\ast}$ such that
\begin{equation}
\forall x\in S,\;B\in\mathcal{E}\;\;P^{m}(x,B)\geq\delta\;\Psi
(B).\label{eq:minorization}%
\end{equation}
\end{definition}

In the whole paper, we work under the following generic hypothesis in which the chain is supposed to be Harris recurrent. Let $\mathbb{P}_{x}$ (resp. $\mathbb{P}_{\nu})$ denote
the~probability measure such that $X_{0}=x$ and $X_{0}\in E$ (resp. $X_{0}%
\sim\nu$), and~$\mathbb{E}_{x}\left(  \cdot\right)  $ is~the~$\mathbb{P}_{x}%
$-expectation (resp. $\mathbb{E}_{\nu}\left(  \cdot\right)  $ the
$\mathbb{P}_{\nu}$-expectation).

\begin{enumerate}[(\text{H})]
\item \label{generic_assumption} The chain $(X_{n})_{n\in\mathbb{N}}$ is a positive Harris recurrent aperiodic Markov chain
with countable state space $(E,\mathcal{E})$, transition kernel $P(x,dy)$ and initial measure $\nu$. Let $S$ be $\Psi$-small with $m=1$ and suppose that the hitting time $\tau_{S}$ satisfies
\begin{equation}
\sup_{x\in S}\mathbb{E}_{x}[\tau_{S}]<\infty,\qquad \text{and} \qquad \mathbb{E}_{\nu}[\tau_{S}]<\infty.\label{tauint}
\end{equation}
\end{enumerate}

This is only for clarity reasons that we assume that $m=1$. As explained in Remark \ref{remark:1_dependent} below, the study of sums over Harris chain, i.e., when $m>1$, can easily be derived from the case $m=1$. 


\subsection{The Nummelin splitting technique}

The Nummelin splitting technique \citep{nummelin:1978,athreya+n:1978} allows to retrieve all regeneration properties for general Harris Markov chains. It consists in extending the probabilistic structure of the
chain in order to construct an artificial atom \citep{Nummelin84}. Start by recalling the definition of regenerative chains.
 
\begin{definition}
We say that a $\psi$-irreducible $X$, aperiodic chain is \textit{regenerative} or \textit{atomic} if there
exists a measurable set~$A$ called an atom, such that $\psi(A)>0$ and for all~$(x,y)\in A^{2}$
we have~$P(x,\cdot)=P(y,\cdot)$.  Roughly speaking, an atom is a
set on which the transition probabilities are the same.\ If the chain visit a
finite number of states then any state or any subset of the states is actually
an atom.
\end{definition}


Assume that the chain $X$ satisfies the generic hypothesis (H). Then the sample space is expanded in order to~define a sequence $(Y_{n}%
)_{n\in\mathbb{N}}$ of independent Bernoulli random variables with parameter $\delta$. The construction relies on the mixture
representation of $P$ on $S,$ namely $P(x,A)=\delta\Psi(A)+(1-\delta
) ({P(x,A)-\delta\Psi(A))}/{(1-\delta)}$, with two components, one of which not
depending on the starting point (implying regeneration when this component is
picked up in the mixture)$.$ The regeneration structure can be retrieved by
the following randomization of the transition probability $P$ each time the
chain $X$ visits the set $S$ : 

\begin{itemize}
\item If $X_{n}\in S$ and $Y_{n}=1$ (which happens with probability $\delta\in\left]
0,1\right[  $), then $X_{n+1}$ is distributed according to the probability
measure $\Psi$,

\item If $X_{n}\in S$ and $Y_{n}=0$ (that happens with probability $1-\delta$), then $X_{n+1} $
is distributed according to the probability measure $(1-\delta)^{-1}(P
(X_{n},\cdot)-\delta\Psi(\cdot)).$
\end{itemize}

The bivariate Markov chain $Z=(X_{n},Y_{n})_{n\in\mathbb{N}}$ is called the \textit{split
chain}. It takes its values in $E\times\left\{  0,1\right\}  $ and is atomic with atom given by  $A=S\times\left\{  1\right\}  $.  Define the sequence of \textit{regeneration times} $(\tau_{A}(j))_{j\geq1},$ i.e.
\[
\tau_{A}=\tau_{A}(1)=\inf\{n\geq1\,  : \,  Z_{n}\in A\}
\]
and, for $ j\geq2$,
\[
\tau_{A}(j)=\inf\{n>\tau_{A}(j-1)\, : \,Z_{n}\in A\}.
\]
It is well known that the bivariate chain~$Z$ inherits all the stability and communication properties of~the chain~$X$, as aperiodicity and $\psi
$-irreducibility. For instance, the
regeneration times has a finite expectation (by recurrence property), more precisely, it holds that \citep[Lemma 9]{azais:2016}
\begin{align*}
\sup_{x\in A}\mathbb{E}_{x}[\tau_{A}]<\infty\qquad \text{and} \qquad \mathbb{E}_{\nu}[\tau_{A}]<\infty.
\end{align*}
It is known from
regeneration theory \citep{meyn+t:2009} that given sequence
$(\tau_{A}(j))_{j\geq1},$ we can cut our data into block segments or cycles defined by 
\[
{B}_{j}=(X_{1+\tau_{A}(j)},\cdots,X_{\tau_{A}(j+1)}),\;j\geq1
\]
according to the consecutive visits of the chain to the regeneration set~$A$.
The strong Markov property the sequences implies that  $(\tau
_{A}(j))_{j\geq1}$ and $({B}_{j})_{j\geq1}$ are i.i.d. \citep[Lemma 3.1]{bednorz+l+l:2008}. 
Denote by $\mathbb P_A$ the probability measure such that $X_0\in A$. The stationary
distribution is given by the Pitman's occupation measure:
\[
\pi(B)=\frac{1}{\mathbb{E}_{A}(\tau_{A})}\left(  \sum_{i=1}^{\tau_{A}%
}\mathbb{I}\{X_{i}\in B\}\right)  ,\;\forall B\in\mathcal{E},
\]
where $\mathbb{I}\{{B}\}$ is the~ indicator function
of~the~event $B$. Let $f:(E,\mathcal{F})\rightarrow\mathbb{R}$ be a general measurable function.
In the following we consider partial sums over regenerative cycles
$f'(B_{j})=\sum_{j=\tau_{A}(i)+1}^{\tau_{A}(i+1)}f(X_{i})$. We denote by
$l_{n}=\sum_{i=1}^{n}\mathbb{I}\{X_{i}\in A\}$ the total number of renewals,
thus we observe $l_{n}+1$ blocks.  Notice that the block length are also i.i.d. with mean $\mathbb{E}_{A}(\tau_{A})$.

\begin{remark}[random number of blocks]
The number of blocks $l_{n\ }$is
random and correlated to the blocks itself. This causes a major difficulty
when deriving second order asymptotic results as well as non-asymptotic results for regenerative
Markov chains.
\end{remark}

\begin{remark}[small set or atom]
The Nummelin splitting technique is useless in the case where the initial chain $X$ is already atomic, in which case the atom is simply $A = S$. For clarity, we choose to focus on the general framework of Harris chains.
\end{remark}

\section{Block Rademacher complexity}\label{sec:Rad_markov}

\subsection{The independent case}

 Let $\xi = (\xi_{i})_{i\in\mathbb{N}}$ be an i.i.d. sequence of random variables defined on $(\Omega,\mathcal{F},\mathbb{P})$ valued in $(E,\mathcal E)$ with common
distribution $P$ on $(E,\mathcal E)$. Let $\mathcal{F}$ be a countable class of real-valued measurable
functions defined on $E$. The Rademacher complexity associated to
$\mathcal{F}$ is given by
\[
R_{n,\xi}(\mathcal{F})=\mathbb{E}\sup_{f\in\mathcal{F}}\left\vert \sum
_{i=1}^{n}\epsilon_{i}f(\xi_{i})\right\vert ,
\]
where the ($\epsilon_{i})_{i\in \mathbb N}$ are i.i.d. Rademacher random
variables, i.e., taking values $+1$ and $-1$, with probability $1/2$,
independent from $\xi$. 

The notion of VC class is powerful because it covers many interesting classes of functions and ensures suitable properties on the Rademacher complexity. The function $F$ is an envelope for the class $\mathcal{F}$ if $|f(x)|\leq
F(x)$ for all $x\in E$ and $f\in\mathcal{F}$. For a metric space
$(\mathcal{F},d)$, the covering number $\mathcal{N}(\epsilon,\mathcal{F},d)$ is
the minimal number of balls of size $\epsilon$ needed to cover $\mathcal{F}
$. The metric that we use here is the
$L_{2}(Q)$-norm denoted by $\Vert.\Vert_{L_{2}(Q)} \ $ and given by $\ \Vert f\Vert_{L_{2}(Q)}=\{\int{f^{2}dQ}\}^{1/2}$. 

\begin{definition}
A class $\mathcal{F}$ of measurable functions $E\to \mathbb R$ is said to be of VC-type (or
Vapnik-Chervonenkis type) for an envelope $F$ and admissible characteristic $(C,v)$
(positive constants) such that $C\geq(3\sqrt
{e})^{v}$ and $v\geq1$, if for all probability measure $Q$ on $(E,\mathcal{E})$
with $0<\Vert F\Vert_{L_{2}(Q)}<\infty$ and every $0<\epsilon<1$,
\[
\mathcal{N}\left(  \epsilon\Vert F\Vert_{L_{2}(Q)},\,\mathcal{F},\,\Vert
.\Vert_{L_{2}(Q)}\right)  \leq{C}{\epsilon^{-v}}.
\]
We also assume that the class is countable to avoid measurability issues (but the non-countable case may be handled similarly by using outer probability and additional measurability assumptions, see \cite{van+w:2007}).
\end{definition}

The next theorem is taken from \cite{gigui2001}, Proposition 2.2, and has
been successfully applied to kernel density estimators in \cite{gine+g:02}. Similar approaches are provided for instance in \cite{einmas}, Proposition 1.

\begin{theorem}
[\cite{gigui2001}]\label{th:indep_case} Let $\mathcal{F}$ be a measurable
uniformly bounded VC class of functions defined on $E$ with envelop $F$ and
characteristic $(C,v)$. Let $U>0$ such that
$|f(x)|\leq U$ for all $x\in E$ and
$f\in\mathcal{F}$. Let $\sigma^{2}$ be such that $\mathbb{E}[f(\xi)^{2}%
]\leq\sigma^{2}$ for all $f\in\mathcal{F}$. Then, whenever $0<\sigma\leq U$,
it holds
\[
R_{n,\xi}(\mathcal{F})\leq M\left[  vU\log\frac{CU}{\sigma}+\sqrt
{vn\sigma^{2}\log\frac{CU}{\sigma}}\right]  ,
\]
where $M$ is a universal constant.
\end{theorem}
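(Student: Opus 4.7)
The approach is the classical Dudley--Talagrand chaining argument. Conditionally on $\xi=(\xi_1,\dots,\xi_n)$, Hoeffding's inequality shows that the Rademacher process $f\mapsto \sum_{i=1}^n \epsilon_i f(\xi_i)$ has sub-Gaussian increments with respect to the (random) empirical pseudo-metric $e_n(f,g)^2 = \sum_{i=1}^n (f-g)^2(\xi_i)$. Dudley's entropy integral then gives, for a universal $K$,
$$\mathbb{E}_\epsilon \sup_{f\in\mathcal F}\Big|\sum_{i=1}^n \epsilon_i f(\xi_i)\Big| \leq K \int_0^{\Sigma_n} \sqrt{\log \mathcal{N}(\eta,\mathcal F,e_n)}\,d\eta,$$
where $\Sigma_n^2 = \sup_{f\in\mathcal F}\sum_{i=1}^n f^2(\xi_i)$ controls the diameter of $\mathcal F$ in $e_n$. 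Since $e_n(f,g) = \sqrt{n}\,\|f-g\|_{L_2(P_n)}$ with $P_n$ the empirical measure on $\xi$, applying the VC property with $Q=P_n$ and the uniform bound $\|F\|_{L_2(P_n)}\leq U$ yields $\mathcal N(\eta,\mathcal F,e_n)\leq C(\sqrt{n}\,U/\eta)^v$.

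Next, I would evaluate the Gaussian-type integral: the substitution $\eta = Be^{-t^2}$ (or integration by parts) gives $\int_0^A \sqrt{v\log(B/\eta)}\,d\eta \leq K_1 A\sqrt{v\log(B/A)}$ whenever $B\geq eA$, producing the conditional bound
$$\mathbb{E}_\epsilon \sup_{f\in\mathcal F}\Big|\sum_{i=1}^n \epsilon_i f(\xi_i)\Big| \leq K_2\,\Sigma_n\,\sqrt{v\log(C\sqrt n\,U/\Sigma_n)}.$$
The main obstacle is taking expectation over $\xi$, since $\Sigma_n$ is random and couples to both the prefactor and the logarithm. To linearize it, write
$\Sigma_n^2 \leq n\sigma^2 + \sup_{f\in\mathcal F}\big|\sum_{i=1}^n (f^2(\xi_i)-\mathbb{E}f^2(\xi_i))\big|.$
Applying symmetrization to the residual term and then the Ledoux--Talagrand contraction principle to the map $u\mapsto u^2$, which is $2U$-Lipschitz on $[-U,U]$ and vanishes at $0$, bounds the expectation of the residual by $8U\,R_{n,\xi}(\mathcal F)$, so $\mathbb{E}[\Sigma_n^2]\leq n\sigma^2 + 8U\,R_{n,\xi}(\mathcal F)$.

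Finally, I would push the expectation inside the chaining bound by Jensen's inequality applied twice: to the concave $y\mapsto y\log(N/y)$ on $(0,N/e]$ (with $y=\Sigma_n^2$ and $N=C^2nU^2$) and to $\sqrt{\cdot}$. The admissibility condition $C\geq(3\sqrt e)^v\geq 3\sqrt e$ together with the crude bound $\Sigma_n^2\leq nU^2$ guarantees that $\mathbb{E}[\Sigma_n^2]\leq N/e$, so Jensen applies. The logarithm then simplifies using $n\sigma^2+8UR_{n,\xi}\geq n\sigma^2$: namely $\log(C^2nU^2/(n\sigma^2+8UR_{n,\xi}))\leq 2\log(CU/\sigma)$. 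This yields the self-bounding inequality
$$R_{n,\xi}(\mathcal F) \leq K_3\sqrt{v\log(CU/\sigma)}\,\sqrt{n\sigma^2 + 8U\,R_{n,\xi}(\mathcal F)}.$$
Squaring and solving this quadratic in $R_{n,\xi}(\mathcal F)$, using $\sqrt{a+b}\leq \sqrt a+\sqrt b$, isolates the two announced contributions: the variance term $\sqrt{vn\sigma^2\log(CU/\sigma)}$ and the boundary residual $vU\log(CU/\sigma)$, giving the claim with a universal $M$. The delicate points are the concavity check needed for Jensen (handled by the $(3\sqrt e)^v$ lower bound on $C$) and the collapse of the logarithm (handled by $\sigma\leq U$ together with the lower bound $n\sigma^2+8UR_{n,\xi}\geq n\sigma^2$).
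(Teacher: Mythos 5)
The paper offers no proof of this statement: it is imported verbatim from Gin\'e and Guillou (2001, Proposition~2.2), so there is nothing internal to compare against. Your reconstruction is, however, a correct and essentially complete proof, and it follows the same route as the cited source: conditional sub-Gaussian chaining in the random empirical metric, the VC entropy bound with $Q=P_n$, symmetrization plus the Ledoux--Talagrand contraction applied to $u\mapsto u^2$ to get $\mathbb{E}[\Sigma_n^2]\leq n\sigma^2+8U\,R_{n,\xi}(\mathcal F)$, and the self-bounding quadratic. All the quantitative checkpoints work out: $C\geq(3\sqrt e)^v$ gives $C^{1/v}\sqrt n\,U\geq e\Sigma_n$ so the entropy integral evaluates as you claim, and $R_{n,\xi}(\mathcal F)\leq nU$ (finite since $\mathcal F$ is countable and bounded) makes the quadratic step non-vacuous. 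Two small points of care you gloss over: (a) Dudley's bound controls increments, so you must add $\mathbb{E}_\epsilon\bigl|\sum_i\epsilon_i f_0(\xi_i)\bigr|\leq\Sigma_n$ for a fixed $f_0$, which is absorbed because $\log(C^{1/v}\sqrt n\,U/\Sigma_n)\geq 1$; and (b) in the Jensen step you need not only concavity of $y\mapsto y\log(N/y)$ (which gives $\mathbb{E}[\Sigma_n^2\log(N/\Sigma_n^2)]\leq \mathbb{E}[\Sigma_n^2]\log(N/\mathbb{E}[\Sigma_n^2])$) but also its \emph{monotonicity} on $(0,N/e]$ in order to replace $\mathbb{E}[\Sigma_n^2]$ by the larger quantity $n\sigma^2+8U R_{n,\xi}(\mathcal F)$; this requires $n\sigma^2+8U R_{n,\xi}(\mathcal F)\leq 9nU^2\leq C^2nU^2/e$, which again is exactly what $C\geq 3\sqrt e$ delivers. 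With those two remarks made explicit, the argument is a faithful proof of the quoted result.
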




\subsection{The Harris case}

To extend the previous approach to any Harris chain $X$, we decompose the chain according to the elements $X_{i}$ that belong to
complete blocks $B_{1},\ldots,B_{l_{n}-1}$ and the elements $X_i$ in $B_{0}$ and
$B_{l_{n}}$. Assuming that $l_n>1$,
\begin{align}\label{decomp:complete_incomplet_block}
&\sup_{f\in\mathcal{F}}\left\vert \sum_{i=1}^{n}%
(f(X_{i})-\mathbb{E}_{\pi}[f])\right\vert\leq \\
&\sup_{f\in\mathcal{F}}\left\vert \sum_{i=\tau_{A}
(1)+1}^{\tau_{A}(l_{n})}(f(X_{i})-\mathbb{E}_{\pi}[f])\right\vert +\sup
_{f\in\mathcal{F}}\left\vert \sum_{i=1}^{\tau_{A}}(f(X_{i})-\mathbb{E}%
_{\pi}[f])\right\vert +\sup_{f\in\mathcal{F}}\left\vert \sum_{i=\tau_{A}
(l_{n})+1}^{n}(f(X_{i})-\mathbb{E}_{\pi}[f])\right\vert ,\nonumber
\end{align}
with the convention that empty sums are $0$.
The terms corresponding to the first and last blocks say $B_{0}$ and
$B_{l_{n}}$, will be treated separately. Because $\tau_A(l_{n})-\tau_A(1)=\sum_{k=1}^{l_{n}}%
\ell(B_{k})$, where $\ell(B_k)$ denote the size of block $k$, it holds that
\[
\left\vert \sum_{i=\tau_{A}(1)+1}^{\tau_{A}(l_{n})}(f(X_{i})-\mathbb{E}_{\pi
}[f])\right\vert =\left\vert \sum_{k=1}^{l_{n}}(f^{\prime}(B_{k})-\ell
(B_{k})\mathbb{E}_{\pi}[f])\right\vert .
\]
where $f'(B_k) = \sum_{i=\tau_A(k)+1}^{\tau_A(k)} f(X_i)$. Hence this term is a (random) summation over complete blocks. Recall that $l_{n}\leq
n$ and that, under (\ref{tauint}), ${l_{n}}/ {n}\rightarrow1/\mathbb{E}_{A}%
\tau_{A}$. Thus, aiming to reproduce the Rademacher approach in the i.i.d. setting, we introduce the following block
Rademacher complexity of the class $\mathcal{F}$,
\[
R_{n,B}(\mathcal{F})=\mathbb E _A\sup_{f\in\mathcal{F}}\left\vert \sum_{k=1}%
^{n}\epsilon_{k}f^{\prime}(B_{k})\right\vert ,
\]
where $(\epsilon_{k})_{k\in\mathbb{N}}$ are Rademacher random variables independent from the blocks $(B_{k}) _{k\in\mathbb{N}}$.

\subsection{Block VC classes}

Even if the blocks $(B_{k})$ form an independent
sequence, we cannot apply directly concentration results for empirical
processes over bounded classes, e.g., Theorem \ref{th:indep_case}, because the
class of functions $\mathcal{F}^{\prime }$ formed by $f^{\prime }$ is not
bounded. To solve this problem we will show that it is possible by an adequate probability transformation to bound the covering number of the functions $\mathcal{F}^{\prime }$ by the one of the original class of functions $\mathcal{F}$ for an adequate metric. In particular, we show that the class of functions
$\mathcal{F}^{\prime}$ has a similar size, in terms of covering number, as the
class $\mathcal{F}$. This in turn will help to extend existing concentration
inequalities on $\mathcal{F}$ to concentration inequalities on $\mathcal{F}%
^{\prime}$. 

For this define $E^{\prime}= \cup_{k=1} ^{\infty}E^{k}$ and let the \textit{occupation
measure} $M$ be given by
\begin{align*}
M(B, dy ) = \sum_{x\in B} \delta_{x}(y), \qquad\text{for every } B\in
E^{\prime}.
\end{align*}
Introduce the following notations : for any function $f: E\to \mathbb R$, let $f':E^{\prime} \to \mathbb R$ be given by 
\begin{align*}
f^{\prime}(B) = \int f(y) \, M(B,\mathrm{d}
y) = \sum_{x\in B} f(x),
\end{align*}
and for any class $\mathcal{F}$ of real-valued functions defined on $E$, denote
by 
\begin{align}
\label{def:classFprime} \mathcal F '  = \{ f^{\prime}\,:\, f\in \mathcal F\} .
\end{align}
The function that gives the size of the blocks $\ell$ is $\ell: E^{\prime}%
\to\mathbb{N}^{*} $, defined by,
\begin{align*}
\ell(B) = \int\, M(B,\mathrm{d} y),\qquad\text{ for every } B\in E^{\prime}.
\end{align*}
Let $\mathcal{E}^{\prime}$ denote the smallest $\sigma$-algebra formed by the
elements of the $\sigma$-algebras $\mathcal{E}^{k}$, $k\geq1$, where
$\mathcal{E}^{k}$ stands for the classical product $\sigma$-algebra. Let
$Q^{\prime}$ denote a probability measure on $(E^{\prime}, \mathcal{E}%
^{\prime})$. If $B(\omega)$ is a random variable with distribution $Q^{\prime
}$, then $M(B(\omega),\mathrm{d} y)$ is a random measure, i.e., $M(B(\omega),
\mathrm{d} y )$ is a (counting) measure on $(E,\mathcal{E})$, almost surely,
and for every $A\in\mathcal{E}$, $M(B(\omega),A) = \int_{A} \, M(B(\omega
),\mathrm{d} y) $ is a measurable random variable (valued in $\mathbb{N}$).
Henceforth $\ell(B(\omega))\times\int f(y) \, M(B(\omega),\mathrm{d} y)$ is a
random variable and, provided that $Q^{\prime}(\ell^{2} ) <\infty$, the map
$Q$, defined by
\begin{align}
\label{def:proba_measure}Q(A) = \mathbb{E}_{Q^{\prime}} \left(  \ell(B)
\times\int_{A} \, M(B,\mathrm{d} y)\right)  / Q^{\prime}( \ell^{2})
,\qquad\text{for every } A\in\mathcal{E},
\end{align}
is a probability measure on $(E,\mathcal{E})$.

\begin{lemma}
	\label{lemma:covering+VC} Let $Q^{\prime}$ be a probability measure on $(E^{\prime}, \mathcal{E}')$ such that $0< \|\ell\|_{L_{2}(Q^{\prime})}<\infty$ and $\mathcal{F}$ be a class of measurable real-valued functions defined on $E$. 
	Then we have, for every $0<\epsilon<\infty$,
	\begin{align*}
	\mathcal{N }(\epsilon\|\ell\| _{L_{2}(Q^{\prime})},\, \mathcal{F}^{\prime},\,
	L_{2}(Q^{\prime}) ) \leq\mathcal{N }(\epsilon,\, \mathcal{F},\, L_{2}(Q) ),
	\end{align*}
	where $\mathcal{F}^{\prime}$ and $Q$ are given in (\ref{def:classFprime}) and
	(\ref{def:proba_measure}), respectively. Moreover if $\mathcal{F}$ is VC with constant envelope $U$ and characteristic $(C,v)$, then $\mathcal{F}^{\prime}$ is VC
	with envelope $U \ell $ and characteristic $(C,v)$. 
	
\end{lemma}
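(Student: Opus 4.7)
The proof hinges on a single Cauchy--Schwarz observation that lets us transfer covers from $\mathcal{F}$ to $\mathcal{F}'$, after which the VC statement is a direct bookkeeping exercise.

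\textbf{Step 1: the key inequality.} The plan is to start from the pointwise bound
\[
|f'(B) - g'(B)|^2 \;=\; \Bigl| \sum_{x\in B} (f(x)-g(x)) \Bigr|^2 \;\leq\; \ell(B)\,\sum_{x\in B} (f(x)-g(x))^2 \;=\; \ell(B)\,\int (f-g)^2(y)\,M(B,\mathrm{d}y),
\]
which is Cauchy--Schwarz in the counting measure $M(B,\cdot)$. This is the only place the structural link between $\ell$, $M$, and $f'$ is used; everything downstream is linear.

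\textbf{Step 2: integrate and recognize $Q$.} Integrating the previous inequality with respect to $Q'$ gives
\[
\|f'-g'\|_{L_2(Q')}^{\,2} \;\leq\; \mathbb{E}_{Q'}\!\left[\ell(B)\int (f-g)^2(y)\,M(B,\mathrm{d}y)\right].
\]
By the very definition \eqref{def:proba_measure} of $Q$, the right-hand side equals $Q'(\ell^2)\cdot\|f-g\|_{L_2(Q)}^{\,2} = \|\ell\|_{L_2(Q')}^{\,2}\cdot\|f-g\|_{L_2(Q)}^{\,2}$. Hence
\[
\|f'-g'\|_{L_2(Q')} \;\leq\; \|\ell\|_{L_2(Q')}\cdot \|f-g\|_{L_2(Q)}.
\]
Consequently, any $\epsilon$-cover $\{f_1,\dots,f_N\}$ of $\mathcal F$ in $L_2(Q)$ yields an $\epsilon\|\ell\|_{L_2(Q')}$-cover $\{f_1',\dots,f_N'\}$ of $\mathcal F'$ in $L_2(Q')$, proving the first claim. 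Note that $Q$ being a genuine probability measure under the hypothesis $0<\|\ell\|_{L_2(Q')}<\infty$ is immediate from its definition, so there is no measure-theoretic subtlety to handle.

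\textbf{Step 3: the VC conclusion.} If $|f|\leq U$ on $E$, the triangle inequality gives $|f'(B)| \leq U\ell(B)$, so $U\ell$ is an envelope for $\mathcal F'$. To check the VC bound, fix an arbitrary probability measure $Q'$ on $(E',\mathcal E')$ with $0<\|U\ell\|_{L_2(Q')}<\infty$, and apply Step 2 with $\epsilon U$ in place of $\epsilon$:
\[
\mathcal{N}\bigl(\epsilon\,\|U\ell\|_{L_2(Q')},\,\mathcal F',\,L_2(Q')\bigr) \;\leq\; \mathcal{N}\bigl(\epsilon\, U,\,\mathcal F,\,L_2(Q)\bigr).
\]
Because the envelope of $\mathcal F$ is the \emph{constant} $U$, one has $\|U\|_{L_2(Q)} = U$ exactly, so the right-hand side is $\mathcal{N}(\epsilon\|U\|_{L_2(Q)},\mathcal F, L_2(Q)) \leq C\epsilon^{-v}$ by the VC property of $\mathcal F$. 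This closes the proof.

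\textbf{Main obstacle.} There is no real obstacle; the only delicate point is making sure that the artificial measure $Q$ on $E$ is exactly tuned so that $\mathbb{E}_{Q'}[\ell\cdot(f'{-}g')^2]$ matches $\|\ell\|_{L_2(Q')}^{\,2}\|f-g\|_{L_2(Q)}^{\,2}$ without any spurious factor, which is the reason for the normalizing constant $Q'(\ell^2)$ in \eqref{def:proba_measure}. One should also be careful to work with a constant envelope in Step 3, since the argument would need an extra step (bounding $\|U\ell\|_{L_2(Q')}$ versus $\|F\ell\|_{L_2(Q')}$) if $F$ were only a function, not a constant.
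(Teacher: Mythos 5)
Your proof is correct and follows essentially the same route as the paper's: the Cauchy--Schwarz step on the counting measure $M(B,\cdot)$ is exactly the paper's Jensen-inequality step, and the VC conclusion via the constant envelope $U$ and the rescaling $\epsilon \mapsto \epsilon U$ matches the paper's argument. Nothing further is needed.
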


\begin{proof}
	The proof is inspired from the proof of Lemma 4.2 presented in \cite{levental:1988}. Let $f'$ be such that (\ref{def:classFprime}) holds with $f$ a $\mathcal E$-measurable function. Then, using Jensen's inequality,
	\begin{align*}
	Q'(f'^2) &= \mathbb E_{Q'}\left( \left(\int  f (y) \, M(B,d y)\right)^2 \right) \\
	&\leq \mathbb E_{Q'} \left( \ell (B)  \left(\int  f (y)^2 \, M(B,d y)\right) \right) \\
	& = Q(f^2) Q'(\ell ^2).
	\end{align*}
	Applying this to the function
	\begin{align*}
	f'(B) - f_k'(B) = \int  (f (y)-f_k(y)) \, M(B,d y),
	\end{align*}
	when each $f_k$ is the center of an $\epsilon$-cover of the space $\mathcal F$ and $\|f-f_k\|_{L_2(Q)}\leq \epsilon$ gives the first assertion of the lemma.
	To obtain the second assertion, note that $F' = U\ell$ is an envelope for $\mathcal F'$. In addition, we have that
	\begin{align*}
	\|F'\|_{L_2(Q')} = U \|\ell\|_{L_2(Q')} .
	\end{align*}
	From this we derive that, for every $0<\epsilon<1$,
	\begin{align*}
	\mathcal N (\epsilon \|F'\| _{L_2(Q')},\, \mathcal F',\, L_2(Q') ) =  \mathcal N (\epsilon U  \|\ell\|_{L_2(Q')},\, \mathcal F',\, L_2(Q') ).
	\end{align*}
	Then using the first assertion of the lemma, we obtain for every $0<\epsilon<1$,
	\begin{align*}
	\mathcal N (\epsilon \|F'\| _{L_2(Q')},\, \mathcal F',\, L_2(Q') )\leq \mathcal N (\epsilon U,\, \mathcal F,\, L_2(Q) ),
	\end{align*}
	which implies the second assertion whenever the class $\mathcal F$ is VC for the envelope $F$.
\end{proof}

Now that we know that any bounded VC class $\mathcal F$ can be extended to a VC class $\mathcal F'$ unbounded defined over the blocks, we consider the bounded case $\mathcal{F}^{\prime} \mathrm 1_{\{ \ell \leq L\}} = \{f' \mathrm 1_{\{ \ell \leq L\}} \,:\, f\in \mathcal F\}$ which, unsurprisingly, is shown to remain VC. 

\begin{lemma}
	\label{lemma:covering+VC_bounded} Let $Q^{\prime}$ be a probability measure on $(E^{\prime}, \mathcal{E}')$ and $\mathcal{F}$ be a class of measurable real-valued functions defined on $E$. 
	Then we have, for every $0<\epsilon<\infty$,
	\begin{align*}
	\mathcal{N }(\epsilon L ,\, \mathcal{F}^{\prime} \mathrm 1_{\{ \ell \leq L\}},\,
	L_{2}(Q^{\prime}) ) \leq\mathcal{N }(\epsilon,\, \mathcal{F},\, L_{2}(\tilde Q) ),
	\end{align*}
	where $\tilde Q =  \mathbb{E}_{Q^{\prime}} \left(  \ell(B)\mathrm 1_{\{ \ell(B) \leq L\}}
\times\int_{A} \, M(B,\mathrm{d} y)\right)  / Q^{\prime}( \ell^{2}\mathrm 1_{\{ \ell \leq L\}})$. Moreover if $\mathcal{F}$ is VC with constant envelope $U$ and characteristic $(C,v)$, then $\mathcal{F}^{\prime} \mathrm 1_{\{ \ell \leq L\}}$ is VC
	with envelope $ L U$ and characteristic $(C,v)$. 
	
\end{lemma}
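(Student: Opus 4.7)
The plan is to run through the same Jensen-based argument as in the proof of Lemma \ref{lemma:covering+VC}, but carry the truncation indicator $\mathrm{1}_{\{\ell \leq L\}}$ through every step so that the \emph{truncated} block length $\ell\, \mathrm{1}_{\{\ell \leq L\}}$ replaces $\ell$ in the normalizer of the companion measure. Applying the Cauchy--Schwarz/Jensen bound $(\int g\, dM)^2 \leq \ell(B) \int g^2\, dM$ with $g = f - f_k$ inside $\mathbb{E}_{Q'}[\mathrm{1}_{\{\ell(B) \leq L\}}\, \cdot\,]$ gives an upper bound of the form $Q'(\ell^2 \mathrm{1}_{\{\ell \leq L\}}) \cdot \tilde Q((f - f_k)^2)$, which is precisely why $\tilde Q$ is defined the way it is.

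The first assertion then follows by transporting an $\epsilon$-cover of $\mathcal{F}$ in $L_2(\tilde Q)$ into an $\epsilon \sqrt{Q'(\ell^2 \mathrm{1}_{\{\ell \leq L\}})}$-cover of $\mathcal{F}' \mathrm{1}_{\{\ell \leq L\}}$ in $L_2(Q')$, and using the crude deterministic bound $\ell^2 \mathrm{1}_{\{\ell \leq L\}} \leq L^2$ to replace this radius by $\epsilon L$. Before doing so I would quickly verify that $\tilde Q$ is indeed a probability measure, which follows from $\int_E M(B, dy) = \ell(B)$ so that the numerator $\mathbb{E}_{Q'}(\ell(B)^2 \mathrm{1}_{\{\ell(B) \leq L\}})$ equals the normalizer; the degenerate case $Q'(\ell^2 \mathrm{1}_{\{\ell \leq L\}}) = 0$ is trivial since every element of the truncated class vanishes $Q'$-a.s.

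For the VC assertion, $F' = UL$ is immediately an envelope, since $|f'(B)|\, \mathrm{1}_{\{\ell(B) \leq L\}} \leq U\, \ell(B)\, \mathrm{1}_{\{\ell(B) \leq L\}} \leq UL$. To recover the characteristic $(C, v)$, I would rescale $\epsilon \mapsto \epsilon U$ in the first assertion and use $\|U\|_{L_2(\tilde Q)} = U$ (available precisely because $\tilde Q$ is a probability measure) to rewrite the right-hand side as $\mathcal{N}(\epsilon \|U\|_{L_2(\tilde Q)}, \mathcal{F}, L_2(\tilde Q))$, which is bounded by $C \epsilon^{-v}$ by the VC hypothesis on $\mathcal{F}$ against the constant envelope $U$.

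There is no substantive obstacle here beyond bookkeeping: the real content is to recognise that keeping $\mathrm{1}_{\{\ell \leq L\}}$ inside Jensen forces the weighting $\ell\, \mathrm{1}_{\{\ell \leq L\}}$ into the definition of $\tilde Q$, and that this weighting still integrates to $Q'(\ell^2 \mathrm{1}_{\{\ell \leq L\}})$, so that $\tilde Q$ is a bona fide probability measure against which the original VC hypothesis transfers cleanly.
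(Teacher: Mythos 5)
Your proof is correct and takes exactly the route the paper intends: the paper's own proof of this lemma is the one-line remark that one repeats the argument of Lemma \ref{lemma:covering+VC} with $\ell$ replaced by $\ell\,\mathrm{1}_{\{\ell\leq L\}}$, which is precisely what you carry out. The details you supply beyond that sketch --- that $\tilde Q$ integrates to one because $M(B,E)=\ell(B)$, the crude bound $\ell^2\mathrm{1}_{\{\ell\leq L\}}\leq L^2$ converting the radius $\epsilon\sqrt{Q'(\ell^2\mathrm{1}_{\{\ell\leq L\}})}$ into $\epsilon L$, and the degenerate case $Q'(\ell^2\mathrm{1}_{\{\ell\leq L\}})=0$ --- are all correct and consistent with it.
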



\begin{proof}
The proof follows the same lines as the proof of Lemma \ref{lemma:covering+VC}, replacing $\ell $ by $\ell \mathrm 1_{\{\ell \leq L\}}$.

\end{proof}

\section{Main result}\label{sec:main_results}

 We shall distinguish between the two following assumptions on the regeneration time $\tau_A$. We say that $\tau_A$ has polynomial moments, whenever
\begin{itemize}\itemsep10pt
\item[(PM)]\label{pm} there exists $p>1$ such that $\mathbb E_A[\tau_A^{p}]<\infty$,
\end{itemize}
and has some exponential moments (EM), as soon as
\begin{itemize}\itemsep10pt
\item[(EM)]\label{em2}  there exists $\lambda>0$ such that
$\mathbb{E}_{A}[\exp(\tau_A\lambda)]<\infty$.
\end{itemize}
 The following result
extends concentration inequalities for empirical processes over independent
random variables \citep{gigui2001,gine+g:02,einmas}, e.g., Theorem
\ref{th:indep_case}, to Markov chains.

\begin{theorem}[block Rademacher complexity]\label{th:rademacher_complexity}
Assume that the chain $X$ satisfies the generic hypothesis (H).
Let $\mathcal{F}$ be VC with constant envelope $U$ and characteristic $(C,v)$. Let
$\sigma^{\prime 2}$ be such that 
\begin{align*}
\mathbb E_A\left[\left(\sum_{i=1}^{\tau_A} f(X_i)\right)  ^{2}\right ]\leq
\sigma^{\prime 2}, \qquad  \text{for all }f\in\mathcal{F}.
\end{align*}
 For some universal constant $M>0$, and any $L$ such that $0<\sigma^{\prime}\leq LU$,

\begin{enumerate}[(i)]
\item \label{th:first_point} if (PM) holds, then
\[
R_{n,B}(\mathcal{F})\leq M\left[  vLU\log\frac{CL U}%
{\sigma^{\prime}}+\sqrt{vn\sigma^{\prime2}\log\frac{CLU}%
{\sigma^{\prime}}}\right]  +\frac{n\mathbb E_A[\tau_A^{p}]}{L^{p-1}},
\]

\item \label{th:second_point} if (EM) holds, then
\[
R_{n,B}(\mathcal{F})\leq M\left[  v LU\log\frac{CL%
U}{\sigma^{\prime}}+\sqrt{v n\sigma^{{\prime}2}\log\frac{C L%
U}{\sigma^{\prime}}}\right]  +nU\exp(-L\lambda/2)C_{\lambda},
\]
where $C_{\lambda}={2}\mathbb E_A[\exp(\tau_A \lambda)]/\lambda$.
\end{enumerate}
\end{theorem}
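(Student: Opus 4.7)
The plan is to cope with the unboundedness of the block sums $f'(B_k)$ on the block space by truncating at the block-length level $L$. Decompose $f'(B_k) = f'(B_k)\mathrm 1_{\{\ell(B_k)\leq L\}} + f'(B_k)\mathrm 1_{\{\ell(B_k)> L\}}$ and use the triangle inequality inside the supremum to obtain $R_{n,B}(\mathcal F) \leq R^{(a)}_n + R^{(b)}_n$, where $R^{(a)}_n$ and $R^{(b)}_n$ denote the block Rademacher complexities of the truncated class $\mathcal F' \mathrm 1_{\{\ell \leq L\}}$ and of its complement $\mathcal F' \mathrm 1_{\{\ell > L\}}$, respectively.

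For the truncated part, I would apply Theorem \ref{th:indep_case} to the i.i.d. sequence $(B_k)_{k\geq 1}$ (which is i.i.d. under $\mathbb P_A$) together with the class $\mathcal F' \mathrm 1_{\{\ell \leq L\}}$. By Lemma \ref{lemma:covering+VC_bounded} this class is VC with constant envelope $LU$ and characteristic $(C,v)$, while the variance bound $\sigma'^2$ is preserved since truncation only shrinks the second moment. Applying the Gin\'e-Guillou bound with $U \leftarrow LU$ and $\sigma \leftarrow \sigma'$ (the hypothesis $\sigma' \leq LU$ is exactly the one stated in the theorem) yields precisely the bracketed term appearing in both (\ref{th:first_point}) and (\ref{th:second_point}).

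For the residual part, I would absorb the Rademacher signs via $|\epsilon_k|=1$ and use the uniform bound $|f'(B_k)|\leq U\ell(B_k)$ valid for every $f\in\mathcal F$, giving
$$\sup_{f\in\mathcal F}\Bigl|\sum_{k=1}^n \epsilon_k f'(B_k)\mathrm 1_{\{\ell(B_k)>L\}}\Bigr|\leq U\sum_{k=1}^n \ell(B_k)\mathrm 1_{\{\ell(B_k)>L\}}.$$
Because the $B_k$ are i.i.d. with $\ell(B_k)\stackrel{d}{=}\tau_A$ under $\mathbb P_A$, taking $\mathbb E_A$ gives $R^{(b)}_n\leq nU\,\mathbb E_A[\tau_A\mathrm 1_{\{\tau_A>L\}}]$. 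Under (PM) I would finish by Markov's inequality, $\mathbb E_A[\tau_A\mathrm 1_{\{\tau_A>L\}}]\leq \mathbb E_A[\tau_A^p]/L^{p-1}$. Under (EM) I would combine the two pointwise estimates $\tau_A\leq(2/\lambda)\,e^{\lambda\tau_A/2}$ (the map $x\mapsto xe^{-\lambda x/2}$ attains its maximum $2/(e\lambda)\leq 2/\lambda$ at $x=2/\lambda$) and $\mathrm 1_{\{\tau_A>L\}}\leq e^{\lambda(\tau_A-L)/2}$ to get $\mathbb E_A[\tau_A\mathrm 1_{\{\tau_A>L\}}]\leq (2/\lambda)\,e^{-\lambda L/2}\,\mathbb E_A[e^{\lambda\tau_A}]$, which matches the stated $C_\lambda e^{-\lambda L/2}$.

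The main obstacle is that the raw block class $\mathcal F'$ is not uniformly bounded, so Theorem \ref{th:indep_case} cannot be applied to it directly. The device that makes the proof work is Lemma \ref{lemma:covering+VC_bounded}: after truncating at block length $L$ the class is still VC, with clean constant envelope $LU$ and unchanged characteristic $(C,v)$. The parameter $L$ is then a free tuning knob that balances the Gin\'e-Guillou main term against the tail residual, to be optimized later in applications according to whether (PM) or (EM) is in force.
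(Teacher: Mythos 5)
Your proposal is correct and follows essentially the same route as the paper's proof: the same truncation of $f'$ at block length $L$, the same appeal to Lemma \ref{lemma:covering+VC_bounded} to apply Theorem \ref{th:indep_case} with envelope $LU$, the same crude bound $nU\,\mathbb E_A[\tau_A\mathrm 1_{\{\tau_A>L\}}]$ for the residual, and the same Markov/exponential tail estimates for (PM) and (EM). Your explicit remarks that truncation preserves the variance bound $\sigma'^2$ and your cleaner justification of $\tau_A e^{-\lambda\tau_A/2}\leq 2/\lambda$ are welcome touches, but the argument is the paper's.
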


\begin{proof}
First we show that
\begin{equation}\label{ineq:intermed}
R_{n,B}(\mathcal{F})\leq M\left[ vLU\log \frac{CLU }{\sigma
^{{\prime }2}}+\sqrt{vn\sigma ^{{\prime }2}\log \frac{CLU }{\sigma
^{{\prime }2}}}\right] +nU \mathbb E_A[\tau_A
\mathrm{1}_{\{ \tau_A>L\}}],
\end{equation}
for some universal constant $M>0$. Then we consider the two cases (\ref{th:first_point}) and (\ref{th:second_point}) to bound $\mathbb E_A[\tau_A
\mathrm{1}_{\{ \tau_A>L\}}]$
accordingly.\newline
 Use the decomposition
\begin{equation}\label{decomp:bounded_unbounded_terms}
\sum_{k=1}^{n}\epsilon _{k}f^{\prime }(B_{k})=\sum_{k=1}^{n}\epsilon _{k}%
\underline{f}_{L}^{\prime }(B_{k})+\sum_{k=1}^{n}\epsilon _{k}\overline{f}%
_{L}^{\prime }(B_{k}),
\end{equation}
where, for any $B\in E^{\prime }$,
\begin{align*}
\underline{f}_{L}^{\prime }(B)=f^{\prime }(B)\mathrm{1}_{\{\ell (B)\leq L\}},\\
\overline{f}_{L}^{\prime }(B)=f^{\prime }(B)\mathrm{1}_{\{\ell (B)>L\}}.
\end{align*}
The first term in (\ref{decomp:bounded_unbounded_terms}) represents a
classical Rademacher complexity as it is a centered empirical process
evaluated over the bounded class $\mathcal{F}' \mathrm{1}_{\{\ell (B)\leq L\}}$. It follows from Lemma \ref{lemma:covering+VC_bounded} that the product class $\mathcal{F}' \mathrm{1}_{\{\ell (B)\leq L\}} $ is VC with constant envelop $L U$. As by assumption, $0<\sigma ^{\prime }\leq LU$,
we deduce from applying Theorem \ref{th:indep_case} (with $LU$
in place of $U$), that
\begin{equation*}
\mathbb{E}_A\sup_{f\in \mathcal{F}}\left\vert \sum_{k=1}^{n}\epsilon _{k}\,%
\underline{f}_{L}^{\prime }(B_{k})\right\vert \leq  M\left[ vLU\log \frac{%
CL U}{\sigma' }+\sqrt{vn\sigma ^{\prime2}\log \frac{CLU}{\sigma' }}\right]
.
\end{equation*}%
For the second term in (\ref{decomp:bounded_unbounded_terms}), we find
\begin{equation*}
\mathbb{E}_A\sup_{f\in \mathcal{F}}\left\vert \sum_{k=1}^{n}\epsilon _{k}%
\overline{f}_{n}^{\prime }(B_{k})\right\vert \leq nU\mathbb E_A[\ell (B_{1})%
\mathrm{1}_{\{\ell (B_{1})>L\}}] =nU \mathbb E_A[\tau_A
\mathrm{1}_{\{ \tau_A>L\}}]
\end{equation*}%
Hence (\ref{ineq:intermed}) is established. To obtain point (\ref%
{th:first_point}) simply use Markov's inequality. To obtain (\ref%
{th:second_point}), note that
\begin{equation*}
\mathbb E_A[\tau_A
\mathrm{1}_{\{ \tau_A>L\}}] \leq \exp (-L\lambda
/2)\mathbb E_A[\tau_A \exp (\tau_A\lambda /2)]\leq \frac{2}{\lambda }\exp
(-L\lambda /2)\mathbb E_A[\exp (\tau_A\lambda )].
\end{equation*}%
The last inequality follows from
$\exp (t\lambda /2){\lambda } /2 \leq \exp (t\lambda )$ which is implied by $x\leq \exp(xt)$ whenever $t\geq 1$.
\end{proof}

\begin{remark}[geometric ergodicity and condition (EM)]\label{remark:geo_ergo}
Condition (EM) is equivalent to each of the following assertions : (i) the geometric ergodicity of the chain $X$, (ii) the (uniform) Doeblin condition, as well as (iii) the Foster-Lyapunov drift condition (see Theorem 16.0.2 in \cite{meyn+t:2009} for the details). Under this assumption, most classical convergence results (for instance, the law of the iterated logarithm or the central limit theorem) are valid \cite[Chapter 17]{meyn+t:2009}.
\end{remark}

\begin{remark}[mixing and (PM)]
We point out that the relationship between (PM) and the rate of decay of mixing coefficients has been
investigated in Bolthausen (1982): this condition is typically fulfilled as
soon as the strong mixing coefficients sequence decreases as an arithmetic
rate $n^{-s}$, for some $s>p-1$.
\end{remark}

\begin{remark}[choice of the atom]
  Finding $A$ in practice can be done by plotting an estimator of the transition
density and finding a zone were the density is lower bounded (in practice,
$\Psi$ may be simply chosen to be the$\ $uniform distribution over the small set).
\end{remark}

The two following results  show that the block Rademacher complexity, previously introduced, is useful to control the
expected values as well as the excess probability of suprema over classes of functions. 

\begin{theorem}
[expectation bound]\label{th:expectation_bound} Assume that the chain $X$ satisfies the generic hypothesis (H).
Let $\mathcal{F}$ be a countable class
of measurable functions bounded by $U$. It holds that
\[
\mathbb{E}_{\nu}\left[  \sup_{f\in\mathcal{F}}\left\vert \sum_{i=1}%
^{n}(f(X_{i})-\mathbb{E}_{\pi}[f])\right\vert \right]  \leq  4R_{n,B}(\mathcal{F}) + 4 \sup_{f\in \mathcal F} | \mathbb E _\pi [f]| \sqrt { n \mathbb E_A[\tau_A^2] } +2U(\mathbb{E}_{\nu}[\tau_{A}]+\mathbb E_A
[\tau_{A} ]),
\]
where $\nu$ stands for the initial measure.
\end{theorem}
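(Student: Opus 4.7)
The plan is to start from the tripartite decomposition \eqref{decomp:complete_incomplet_block} and bound each of the three pieces in expectation under $\mathbb{P}_\nu$. The two boundary pieces (initial and terminal incomplete blocks) are handled by the crude pointwise bound $|f(X_i)-\mathbb{E}_\pi[f]|\le 2U$, reducing them to expectations of the corresponding block lengths: the first gives $2U\mathbb{E}_\nu[\tau_A]$ by assumption (H), and for the second, the terminal residual $n-\tau_A(l_n)$ is controlled by a fresh cycle drawn from $\mathbb{P}_A$ via the strong Markov property at the last regeneration in the atom $A$, yielding $2U\mathbb{E}_A[\tau_A]$. The main piece, namely $\mathbb{E}_\nu[\sup_f|\sum_{k=1}^{l_n}Z_k(f)|]$ with $Z_k(f)=f'(B_k)-\ell(B_k)\mathbb{E}_\pi[f]$, is where the real work lies.

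For this main piece, observe first that under $\mathbb{P}_\nu$ the blocks $B_1,B_2,\ldots$ are i.i.d.\ with the $\mathbb{P}_A$-law of $B$, and that Pitman's occupation-measure identity gives $\mathbb{E}_A[Z_k(f)]=0$. Since $l_n\le n$, the random-index sum is dominated by $\max_{1\le m\le n}\sup_f|\sum_{k=1}^mZ_k(f)|$. I would then apply two standard steps: (i) a desymmetrization via an independent copy $(Z_k')$ coupled with an independent Rademacher sequence $(\epsilon_k)$, costing a factor $2$ by Jensen and the triangle inequality; and (ii) Lévy's maximal inequality applied to the symmetric process $(\sum_{k\le m}\epsilon_kZ_k)_{m\le n}$ indexed by $\mathcal{F}$, costing another factor $2$. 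The combined effect bounds the above maximum by $4\,\mathbb{E}[\sup_f|\sum_{k=1}^n\epsilon_kZ_k(f)|]$. Splitting $\sum_k\epsilon_kZ_k(f)=\sum_k\epsilon_kf'(B_k)-\mathbb{E}_\pi[f]\sum_k\epsilon_k\ell(B_k)$ and using the triangle inequality, the first piece equals $R_{n,B}(\mathcal{F})$ by definition, and the second, after pulling $\sup_f|\mathbb{E}_\pi[f]|$ out and applying Cauchy-Schwarz to the independent centered sum $\sum_k\epsilon_k\ell(B_k)$, contributes $\sup_f|\mathbb{E}_\pi[f]|\sqrt{n\mathbb{E}_A[\tau_A^2]}$. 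Multiplying by $4$ gives the first two terms of the claimed bound.

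The main obstacle is the passage from the random-index sum to a deterministic sum of length $n$ through maximization and symmetrization: one must be careful that $l_n$ is not a stopping time in the time-indexed filtration of the chain (only in the cycle filtration), and that Lévy's inequality, classically stated for a symmetric random walk in a Banach space, applies to the process indexed by $\mathcal{F}$ via the symmetry of $(\epsilon_kZ_k)$ conditionally on the blocks. The terminal-block argument is also delicate because $\tau_A(l_n)$ involves looking into the future to identify the last regeneration before $n$, so the strong Markov reasoning must be phrased via the genuine stopping time $\tau_A(l_n+1)$ or through a coupling with a fresh block distributed as a $\mathbb{P}_A$-cycle, rather than applied at $\tau_A(l_n)$ directly.
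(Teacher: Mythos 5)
Your proposal is correct and follows essentially the same route as the paper: the same tripartite block decomposition, the same crude $2U$ bound on the two incomplete blocks, and the same factor-$4$ maximal symmetrization for the complete-block term followed by a triangle-inequality split and Cauchy--Schwarz on $\sum_k\epsilon_k\ell(B_k)$. The only difference is presentational: the paper invokes Lemma 1.2.6 of de la Pe\~na and Gin\'e (1999) as a black box, whereas you unpack it into its constituent desymmetrization and L\'evy maximal-inequality steps (and you are somewhat more careful than the paper about the terminal block and the non-stopping-time nature of $l_n$).
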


\begin{proof}

We rely on the block decomposition (\ref{decomp:complete_incomplet_block}%
). First, we apply Lemma 1.2.6 in \cite{delapena+g:1999} to treat the term
formed by complete blocks. Denote by $\mathcal{F}_{c}^{\prime}$ the
class formed by $\{f^{\prime}-\ell \, \mathbb{E}_{\pi
}[f]\}$. We obtain
\begin{align*}
\mathbb{E}_{\nu}\left[  \sup_{f\in\mathcal{F}}\left\vert \sum_{i=\tau
_{A}(1)+1}^{\tau_{A}(l_{n})}(f(X_{i})-\mathbb{E}_{\pi}[f])\right\vert \right]
&  \leq  \mathbb{E}_A\left[  \max_{1\leq l\leq n}\sup_{f\in\mathcal{F}}\left\vert
\sum_{k=1}^{l_{n}}\{f^{\prime}(B_{k})-\ell(B_{k})\mathbb{E}_{\pi}[f]\}\right\vert
\right]   \\
&  \leq4\mathbb{E}_{\epsilon,A} \left[  \sup_{f\in\mathcal{F}}\left\vert
\sum_{k=1}^{n}\epsilon_{k}\{f^{\prime}(B_{k})-\ell(B_{k})\mathbb{E}_{\pi
}[f]\}\right\vert \right] \\
&  =4R_{n,B}(\mathcal{F}_{c}^{\prime}).
\end{align*}
From the triangular inequality and because 
\begin{align*}
\mathbb{E}_{\epsilon,A} \left[  \sup_{f\in\mathcal{F}}\left\vert
\sum_{k=1}^{n}\epsilon_{k} \ell(B_{k})\mathbb{E}_{\pi
}[f] \right\vert \right]\leq \sup_{f\in \mathcal F} | \mathbb E _\pi [f]|  \sqrt { n \mathbb E_A[\tau_A^2] },
\end{align*}
we obtain that $R_{n,B}(\mathcal{F}_{c}^{\prime})\leq R_{n,B}(\mathcal{F}^{\prime}) + \sup_{f\in \mathcal F} | \mathbb E _\pi [f]|  \sqrt { n \mathbb E_A[\tau_A^2] }$.
The terms corresponding to incomplete blocks are treated as follows. We have
\begin{align*}
&  \mathbb{E}_{\nu}\sup_{f\in\mathcal{F}}\left\vert \sum_{i=1}^{\tau_{A}%
(1)}(f(X_{i})-\mathbb{E}_{\pi}[f])\right\vert \leq2U\mathbb{E}_{\nu}[\tau
_{A}],\\
&  \mathbb{E}_{\nu}\sup_{f\in\mathcal{F}}\left\vert \sum_{i=\tau_{A}(l_{n}%
)}^{n}(f(X_{i})-\mathbb{E}_{\pi}[f])\right\vert \leq2U\mathbb E_A[\tau_{A}].
\end{align*}

\end{proof}


Using Theorem \ref{th:expectation_bound}, we now rephrase the result of \cite{adamczak:2008} to obtain a concentration bound for the empirical process involving the Rademacher complexity $R_{n,B}(\mathcal{F}^{\prime})$ defined previously.

\begin{theorem}[concentration bound, \cite{adamczak:2008}]\label{th:proba_bound}
Assume that the chain $X$ satisfies the generic hypothesis (H), (EM) and there exists $\lambda >0$ such that $\mathbb E_\nu[\exp(\lambda\tau_A)] <\infty$. Let $\mathcal{F}$ be a countable class of measurable functions bounded by $U$. Let $R_n$ be such that
\begin{align*}
&R_{n}\geq 4R_{n,B}(\mathcal{F}) + 4 \sup_{f\in \mathcal F} | \mathbb E _\pi [f]| \sqrt { n \mathbb E_A[\tau_A^2] } +2U(\mathbb{E}_{\nu}[\tau_{A}]+\mathbb E_A
[\tau_{A} ]),\\
& \sigma^{\prime 2}\geq \sup_{f\in\mathcal{F}}\mathbb E _{A}\left[\left(  \sum_{i=1}^{\tau_{A}}
f(X_{i})\right)^2\right].
\end{align*}
Then, for some
universal constant $K>0$, and for $\tau>0$ depending on the tails of the regeneration time, we have, for all $t\geq1+KR_{n}$, 
\[
\mathbb{P}_{\nu}\left(  \sup_{f\in\mathcal{F}}\left\vert \sum_{i=1}^{n}%
(f(X_{i})-\mathbb{E}_{\pi}(f) ) \right\vert \geq t\right)  \leq K\exp\left[
-\frac{\mathbb{E}_{A}[\tau_{A}]}{K}\min\left(  \frac{(t-KR_{n})^{2}}{n \sigma^{\prime 2}},\frac
{(t-KR_{n})}{\tau^{3}Ulog\;n}\right)  \right],
\]
yielding alternatively, that for any $n/\log(n) \geq {\tau^{3}U} /   \sigma^{\prime 2}$ with probability $1-\delta$ we have,
\[
\sup_{f\in\mathcal{F}}\left\vert \sum_{i=1}^{n}(f(X_{i})-\mathbb{E}_{\pi
}(f) ) \right\vert \leq KR_{n}+\max\left(  \sqrt{n}  \sigma^\prime  \sqrt{K\log\left(\frac
{K}{\delta}\right)},\log\left(\frac{K}{\delta}\right)\frac{\tau^{3}U\log(n)}{\mathbb{E}_{A}[\tau_{A}]}\right).
\]

\end{theorem}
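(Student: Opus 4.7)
The plan is to directly invoke Theorem 6 of \cite{adamczak:2008}, which already provides a Bernstein-type deviation inequality for the centered supremum $Z_n = \sup_{f \in \mathcal{F}} | \sum_{i=1}^n (f(X_i) - \mathbb{E}_\pi f)|$ of a Harris chain satisfying (H), (EM) and the additional exponential moment $\mathbb{E}_\nu[\exp(\lambda \tau_A)]<\infty$. His result bounds the deviation of $Z_n$ around its expectation in terms of (i) $\mathbb{E}_\nu Z_n$, (ii) the block-variance proxy $\sigma'^2 = \sup_f \mathbb{E}_A[(\sum_{i=1}^{\tau_A} f(X_i))^2]$, and (iii) an envelope term $\tau^3 U \log n$ which comes from truncating the block functionals at level $\sim \log(n)/\lambda$ (the factor $\tau$ encapsulating the exponential moments of $\tau_A$ under both $\mathbb{P}_A$ and $\mathbb{P}_\nu$). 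Explicitly, his theorem gives, for $t\geq 0$,
\[
\mathbb{P}_\nu(Z_n \geq \mathbb{E}_\nu Z_n + t) \leq K \exp\left[ -\frac{\mathbb{E}_A \tau_A}{K} \min\left( \frac{t^2}{n \sigma'^2},\, \frac{t}{\tau^3 U \log n}\right)\right].
\]

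First I would plug in the expectation bound from Theorem \ref{th:expectation_bound}: under our assumptions, $\mathbb{E}_\nu Z_n \leq R_n \leq K R_n$ for $K\geq 1$. Rewriting $\mathbb{E}_\nu Z_n + t$ as $s$ with $s \geq 1 + KR_n$ (so that $t = s - \mathbb{E}_\nu Z_n \geq s - KR_n \geq 1$), and monotonicity of the minimum in $t$, the deviation tail above yields exactly the first display of the theorem after possibly enlarging the universal constant $K$.

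For the high-probability reformulation, I would invert the Bernstein tail: equating the right-hand side to $\delta$ leads to $\min(a,b) = K\log(K/\delta)/\mathbb{E}_A\tau_A$, which splits into a Gaussian regime $t - KR_n \leq \sqrt{Kn\sigma'^2 \log(K/\delta)/\mathbb{E}_A\tau_A}$ and a sub-exponential regime $t - KR_n \leq K\tau^3 U \log(n) \log(K/\delta)/\mathbb{E}_A\tau_A$. Taking the maximum of the two, and absorbing the factor $\mathbb{E}_A\tau_A$ into $K$ (which is licit under the stated regime $n/\log n \geq \tau^3 U / \sigma'^2$, ensuring the Gaussian piece dominates for small $\delta$), gives the stated bound.

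The only real obstacle is bookkeeping, namely matching our parameters $(U, \sigma'^2, R_n)$ with those appearing in Adamczak's theorem. In particular, one must verify that the incomplete first and last blocks, which in Adamczak's argument are handled via separate tail estimates using the exponential moments of $\tau_A$ under $\mathbb{P}_\nu$ and $\mathbb{P}_A$, are compatible with the term $2U(\mathbb{E}_\nu\tau_A + \mathbb{E}_A\tau_A)$ appearing in our $R_n$; since Theorem \ref{th:expectation_bound} already absorbs these boundary terms into the expectation, no additional estimation is needed beyond tracking constants. Likewise the variance parameter $\sigma'^2$ coincides with the asymptotic-variance surrogate in Adamczak's formulation since both are taken over regeneration cycles under $\mathbb{P}_A$.
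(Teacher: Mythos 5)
The paper gives no separate proof of this theorem: it is explicitly presented as a rephrasing of Adamczak's (2008) tail inequality for suprema over regenerative Markov chains, combined with the expectation bound of Theorem \ref{th:expectation_bound} to replace $\mathbb{E}_\nu Z_n$ by $R_n$, followed by the standard inversion of the Bernstein-type tail. Your proposal does exactly this, so it is correct and follows essentially the same route as the paper.
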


\begin{remark}[on Theorem \ref{th:proba_bound}]
An explicit value for the constant $K$\ is difficult to obtain from
the results of Adamczak(2008)\ but would be of great interest in practical
applications. Notice that for $n$ large the second member of the inequality
reduces to the bound $KR_{n}+\sqrt{n}\sigma' \sqrt{K\log( {K}/{\delta})}
$, which gives the same rate as in the i.i.d. case. 
\end{remark}

\begin{remark}[$m$ different from $1$]\label{remark:1_dependent}
We have reduced our analysis to the case $m=1$, however it is very easy to see now how the general case $m>1$ can be handled up to a modified constant in the bound. Recall that when $m>1$ then the blocks $f(B_{i})$  are 1-dependent (see for instance \cite{chen:1999} Corollary 2.3). It follows that we can split the sum as follows
$$
\sum_{k=0}^{l_{n}}f(B_{i})=\sum_{k=0, \, k\, \text{even}}^{l_{n}} f(B_{k})+\sum_{k=0, \, k\, \text{odd}}^{l_{n}}%
f(B_{k})$$
Then notice that, because of the $1$-dependence property,  in each sums the
blocks are independent and we now have two sums of at most $n/2$
independent blocks that can be treated separately as we did before.

\end{remark}

\section{Applications}\label{sec:applications}

\subsection{Kernel density estimator}\label{sec:applications_1}

Given $n\geq 1$ observations of a Markov chains $X\subset \mathbb R^d$, the kernel density estimator of the stationary measure $\pi$ is
given by
\[
\hat{\pi}_{n}(x)=n^{-1}\sum_{i=1}^{n}K((x-X_{i})/h_{n})/h_{n}^{d},
\]
where $K:\mathbb{R}^{d}\rightarrow\mathbb{R}$, called the kernel, is such that
$\int K(x)dx=1\ $and $(h_{n})_{n\geq 1}$ is a positive sequence of bandwidths. 

The analysis of the asymptotic behavior of $\hat{\pi}_{n}-\pi$ is
traditionally executed by studying two terms. The bias term, $\mathbb{E}\hat{\pi}_{n}-\pi$, is classically treated  by using
techniques from functional analysis \citep[section 4.1.1]{gine+n:2008}. The variance term, $\hat{\pi}_{n}-\mathbb{E}\hat{\pi}_{n}$, is usually treated using
empirical process technique in the case of independent random variables. In
the next, we provide some results on the asymptotic behavior of the variance term.

We shall consider kernel functions $K:\mathbb{R}^{d}\rightarrow\mathbb{R}$
that taking one of the two following forms,
\begin{align}\label{assump:kernel}
(i)\quad K(x)=K^{(0)}(|x|),\qquad\text{or}\qquad(ii)\quad K(x)=\prod_{k=1}%
^{d}K^{(0)}(x_{k}),
\end{align}
where $K^{(0)} $ is a bounded function of bounded variation with support $[-1,1]$. From
\cite{nolan+p:1987}, the class of function
\[
\mathcal K = \{y\mapsto K((x-y)/h)\,:\,h>0,\,x\in\mathbb{R}\}\quad\text{is a uniformly
bounded VC class.}%
\]
This previous point has been used to handle the asymptotic analysis of kernel
estimate \citep{gigui2001} as well as in semiparametric problems as for instance in \cite{portier:2017}.

\begin{theorem}\label{th:unif_conv_kernel_1}Assume that the chain $X\subset \mathbb R^d$ satisfies the generic hypothesis (H) ,the stationary density $\pi$ is supposed to be bounded, the kernel $K $ is given by (\ref{assump:kernel}) and $K(x)\leq U$, for all $x\in \mathbb R^d$. Suppose that $h_n\to 0 $ and there exists $\beta>0$ such that $h_n\geq n^{-\beta}$.

\begin{enumerate}[(i)]
\item \label{th:first_point_kernel} If (PM) holds for $p>2$ and  $0<\beta (p/(p-1))<1/d$, we have
\[
\mathbb{E}_\nu \left[\sup_{x\in\mathbb{R}^{d}}|\hat{\pi}_n(x)- \mathbb E_\pi [\hat \pi_n(x)] |\right] = O\left( \sqrt
{\frac{\log\left(  nh_{n}^{-1}\right)  }{nh_{n}^{dp/(p-1)}}}\right) .
\]

\item \label{th:second_point_kernel} If (EM) holds and  $0<\beta <1/d$, we have
\[
\mathbb{E}_\nu\left[\sup_{x\in\mathbb{R}^{d}}|\hat{\pi}_n(x)-\mathbb E_\pi [\hat \pi_n(x)]  |\right] =  O\left( \sqrt
{\frac{\log(n)^2}{nh_{n}^{d}}}\right).
\]

\end{enumerate}
\end{theorem}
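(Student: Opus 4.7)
The plan is to instantiate Theorem \ref{th:expectation_bound} on the rescaled kernel class
$\mathcal{F}_n = \{y \mapsto h_n^{-d} K((x-y)/h_n) \,:\, x \in \mathbb{R}^d\}$, whose envelope is $U_n := U h_n^{-d}$ and which inherits from the Nolan--Pollard class $\mathcal{K}$ recalled after (\ref{assump:kernel}) a VC-type characteristic $(C,v)$ independent of $n$ and $h_n$. After dividing by $n$ the bound produces $\mathbb{E}_\nu \sup_x |\hat\pi_n(x) - \mathbb{E}_\pi \hat\pi_n(x)|$. The mean term of Theorem \ref{th:expectation_bound} is $O(\sqrt{n})$ because $|\mathbb{E}_\pi f| = |(\pi * K_{h_n})(x)| \leq \|\pi\|_\infty \int|K|$ is uniform over $f \in \mathcal{F}_n$ and $\mathbb{E}_A[\tau_A^2] < \infty$ in both regimes; the boundary term $U_n(\mathbb{E}_\nu[\tau_A]+\mathbb{E}_A[\tau_A])$ is $O(h_n^{-d})$. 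Dividing by $n$, both are of smaller order than the target rate once $h_n \geq n^{-\beta}$ with $\beta < 1/d$. The whole problem therefore reduces to bounding $R_{n,B}(\mathcal{F}_n)$ via Theorem \ref{th:rademacher_complexity}.

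The two ingredients required by Theorem \ref{th:rademacher_complexity} are the envelope $U_n$, already identified, and an explicit bound on
$\sigma'^2 = \sup_{f\in\mathcal{F}_n} \mathbb{E}_A[(\sum_{i=1}^{\tau_A} f(X_i))^2]$. I would obtain this by the truncation
\begin{align*}
\mathbb{E}_A\!\left[\Bigl(\sum_{i=1}^{\tau_A} f(X_i)\Bigr)^{\!2}\right]
\leq L_2\, \mathbb{E}_A\!\left[\sum_{i=1}^{\tau_A} f(X_i)^2\right]
+ U_n^{2}\, \mathbb{E}_A\!\left[\tau_A^{2}\, \mathrm{1}_{\{\tau_A > L_2\}}\right],
\end{align*}
where the first piece follows from Cauchy--Schwarz on $\{\tau_A \leq L_2\}$ and the second from the crude envelope bound $|\sum f(X_i)| \leq U_n \tau_A$. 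Pitman's occupation formula then gives $\mathbb{E}_A[\sum_{i=1}^{\tau_A} f(X_i)^2] = \mathbb{E}_A[\tau_A]\,\pi(f^2) \leq \mathbb{E}_A[\tau_A]\|\pi\|_\infty \int K^2 \cdot h_n^{-d}$, so the bounded piece is $O(L_2 h_n^{-d})$. Balancing: \emph{under} (EM), pick $L_2 \asymp \lambda^{-1}\log n$ to absorb the exponential tail and obtain $\sigma'^2 = O(\log(n)\, h_n^{-d})$; \emph{under} (PM) with $p > 2$, Markov's inequality yields $\mathbb{E}_A[\tau_A^2 \mathrm{1}_{\tau_A > L_2}] \leq \mathbb{E}_A[\tau_A^p]/L_2^{p-2}$, and the two pieces are equalised at $L_2 \asymp h_n^{-d/(p-1)}$, giving $\sigma'^2 = O(h_n^{-dp/(p-1)})$.

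With these $\sigma'^2$ in hand, the final step is to tune the cutoff $L$ that appears in Theorem \ref{th:rademacher_complexity}. In the (EM) case, setting $L \asymp \lambda^{-1}\log n$ renders the exponential remainder $n U_n \exp(-L\lambda/2) C_\lambda$ of smaller order, and plugging in yields $R_{n,B}(\mathcal{F}_n) = O(\sqrt{n \log^2(n)/h_n^d})$, the VC envelope contribution $vLU_n \log(CLU_n/\sigma')$ being swallowed because $n h_n^d \geq n^{1-\beta d}$ grows polynomially. Dividing by $n$ returns point (\ref{th:second_point_kernel}). In the (PM) case, picking $L \asymp (n h_n^{dp/(p-1)})^{1/(2(p-1))}$ reduces the polynomial remainder $n \mathbb{E}_A[\tau_A^p]/L^{p-1}$ to an $O(\sqrt{n\sigma'^2 \log(n h_n^{-1})})$ quantity; the hypothesis $\beta p/(p-1) < 1/d$ ensures $n h_n^{dp/(p-1)} \to \infty$, killing the VC envelope term and yielding point (\ref{th:first_point_kernel}).

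The main obstacle is the control of $\sigma'^2$: unlike the i.i.d.\ case where $\sigma^2 = \pi(f^2) = O(h_n^{-d})$, the regenerative second moment picks up an extra factor $\tau_A$ from Cauchy--Schwarz that must be cut off by truncation. Under (EM) this cost is only logarithmic and propagates into the extra $\log n$ factor in point (\ref{th:second_point_kernel}); under (PM) it degrades the effective variance from $h_n^{-d}$ to $h_n^{-dp/(p-1)}$, which is precisely the source of the weaker bandwidth exponent in point (\ref{th:first_point_kernel}). A secondary bookkeeping task is checking that the square-root variance term in Theorem \ref{th:rademacher_complexity} dominates the VC envelope term and the tail remainder for the chosen $L$, which reduces to the lower bounds $\beta < 1/d$ and $\beta p/(p-1) < 1/d$ assumed on the bandwidth.
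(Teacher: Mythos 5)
Your proposal follows essentially the same route as the paper: reduce to Theorem \ref{th:expectation_bound}, bound $\sigma'^2$ by truncating the block length at a level $\tilde L$ balanced via Markov's inequality (PM) or the exponential tail (EM), and then tune the cutoff $L$ in Theorem \ref{th:rademacher_complexity}. The only differences are cosmetic (you rescale the class by $h_n^{-d}$ where the paper keeps $\mathcal K$ unnormalized and divides at the end) or equivalent in effect (in the PM case you balance $L$ against the square-root term where the paper balances it against the envelope term; both choices deliver the stated rate), and your variance computations agree with the paper's after rescaling.

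One caveat on the PM case: you take the tail remainder to be $n\,\mathbb E_A[\tau_A^p]/L^{p-1}$, which is what Theorem \ref{th:rademacher_complexity}(i) literally states, but the proof of that theorem actually produces $n U\,\mathbb E_A[\tau_A \mathrm 1_{\{\tau_A>L\}}]\le n U\,\mathbb E_A[\tau_A^p]/L^{p-1}$ with $U$ the envelope --- the $U$ is dropped in the statement (compare the EM remainder, which does carry it). Since your class has envelope $U_n = U h_n^{-d}$, the correct remainder is $h_n^{-d}$ times larger than the one you balance, and your choice $L \asymp (n h_n^{dp/(p-1)})^{1/(2(p-1))}$ then leaves a tail term exceeding the target rate by a factor $h_n^{-d}/\sqrt{\log}$. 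This is repairable: enlarging $L$ so that $L^{p-1}\asymp \sqrt{n}\,h_n^{-d(p-2)/(2(p-1))}$ kills the remainder, and the envelope term $vLU_n\log(\cdot)$ remains subdominant precisely under the hypothesis $\beta p/(p-1)<1/d$; alternatively, working with the unnormalized class as the paper does avoids the issue entirely. So the argument is sound, but the PM bookkeeping needs this adjustment to be consistent with what Theorem \ref{th:rademacher_complexity} actually proves.
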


\begin{proof}

In virtue of Theorem \ref{th:expectation_bound}, it suffices to provide for both cases a sufficiently tight bound on $R_{n,B}(\mathcal K) $. First we consider (\ref{th:first_point_kernel}). By Jensen inequality we have
\[
\left(\frac{1}{\ell(B)}\sum_{X_{i}\in B}K((x-X_{i})/h_{n})\right)^{2}\leq\frac{1}{\ell (B)}%
\sum_{X_{i}\in B}K((x-X_{i})/h_{n})^{2}%
\]
and for any $\tilde{L}$,\ we
get\ by using\ the expression of Pitman's occupation measure
\begin{align*}
\sigma^{\prime 2} &=\mathbb E_A \left[ \left(\sum_{X_{i}\in B}K((x-X_{i})/h_{n})\right)^{2} \right]\\
&  \leq\mathbb E_A\left[  \ell(B)\sum_{X_{i}\in
B}K((x-X_{i})/h_{n})^{2}\right] \\
&  \leq\tilde{L}\mathbb E_A[\ell(B)]\mathbb{E}_{\pi}\left[  K((x-X)/h_{n}%
)^{2}\right]  +U^{2}\mathbb E_A\left[  \ell(B)^{2}\mathrm{1}_{\{\ell
(B)>\tilde{L}\}}\right] \\
&  \leq\tilde{L}\mathbb E_A[\tau_A]h_{n}^{d}\Vert\pi\Vert_{\infty}v_{K}
+\frac{U^{2}\mathbb E_A\left[ \tau_A^{p}\right]  }{\tilde{L}^{p-2}}.
\end{align*}
Equilibrating between the first and second term gives $\tilde{L}= A_1 h_{n}^{-d/(p-1)}$, $A_1>0$, which gives 
$\sigma^{\prime 2}\leq A_2 h_{n}^{d(p-2)/(p-1)}$, $A_2>0$. Applying Theorem \ref{th:rademacher_complexity}, we get
\begin{align}\label{bound:L}
&R_{n,B}(\mathcal K) \\
&\leq A_3 \left( \frac{L\log(C L UA_2 h_{n}^{-d(p-2)/(p-1)})}{nh_{n}^{d}}+\sqrt{\frac{ h_{n}
^{d(p-2)/(p-1)}\log(C L UA_2 h_{n}^{-d(p-2)/(p-1)} )}{nh_{n}^{2d}}}+\frac{1}%
{L^{p-1}h_{n}^{d}}\right),\nonumber\\
&= A_3 \left( \frac{L\log(C L UA_2 h_{n}^{-d(p-2)/(p-1)})}{nh_{n}^{d}}+\sqrt{\frac{ \log(C L UA_2 h_{n}^{-d(p-2)/(p-1)} )}{nh_{n}^{dp/(p-1)}}}+\frac{1}%
{L^{p-1}h_{n}^{d}}\right),\nonumber
\end{align}
Choose $L$ by equilibrating the first and last term of the
preceding decomposition
\[
L_{n}=A_4 \left(  \frac{n}{\log(n^{1/p}h_{n}^{-d(p-2)/(p-1)})}\right)  ^{1/p},
\]
with $A_4>0$. Let $\alpha_n =n^{1/p}h_{n}^{-d(p-2)/(p-1)}$. Then from (\ref{bound:L}) we get
\begin{align*}
R_{n,B}(\mathcal K) & \leq    A_3 \sqrt{\frac{\log( CUA_2 \alpha_n/ \log \alpha_n) }{nh_{n}^{dp/(p-1)}}}+2A_3A_4\left(
\frac{\log( CUA_2 \alpha_n/ \log \alpha_n) }{nh_{n}^{dp/(p-1)}}\right)
^{(p-1)/p} (\log(\alpha_n) )^{-1/p} \\
&  =A_3 \sqrt{\frac{\log(CUA_2 \alpha_n)}{nh_{n}^{dp/(p-1)}}%
}(1+o(1)),
\end{align*}
where the last equality is because, by assumptions on $h_n$, it holds that  $\alpha_n \leq n^{\alpha_1}$ and $nh_{n}^{dp/(p-1)} \geq n^{\alpha_2}$, for some positive constants $\alpha_1$ and $\alpha_2$. Comparing the previous bound on $R_{n,B}(\mathcal K)$ with the other terms given in Theorem \ref{th:expectation_bound} leads to the result.

In the second case (ii), a similar
bound is valid for $\sigma^{\prime 2}$, we have
\begin{align*}
\sigma^{\prime 2} &  \leq\tilde{L}\mathbb E_A[\ell(B)]h_{n}^{d}\Vert\pi
\Vert_{\infty}v_{K}+U^{2}\mathbb E_A\left[  \ell(B)^{2}\exp(\lambda
\ell(B)/2)\right]  \exp(-\lambda\tilde{L}/2)\\
&  \leq\tilde{L}\mathbb E_A[\tau_A]h_{n}^{d}\Vert\pi\Vert_{\infty}v_{K}%
+\frac{8U^{2}}{\lambda^{2}}\mathbb E_A\left[  \exp(\tau_A\lambda)\right]
\exp(-\lambda\tilde{L}/2)
\end{align*}
Taking $\tilde{L}=2\log(h_{n}^{-d})/\lambda$ gives $\sigma^{\prime 2} \leq  B_1
h_{n}^{d}\log(h_{n}^{-d})$, $B_1>0$. Then using (\ref{th:second_point}) in Theorem
\ref{th:rademacher_complexity}, we find
\[
R_{n,B}(\mathcal K) \leq B_2 \left(\frac{L\log(CLU B_1 h_{n}^{d}\log(h_{n}^{-d}) )}{nh_{n}^{d}}+\sqrt{\frac{\log(h_{n}^{-d}%
)\log(CLU h_{n}^{d}\log(h_{n}^{-d}))}{nh_{n}^{d}}}+\frac{\exp(-L\lambda/2)}{h_{n}^{d}}\right) ,
\]
with $B_2>0$. Choosing $L_{n}=2\log(n)/\lambda$ we finally get
\[
R_{n,B}(\mathcal K) \leq B_2 \sqrt{\frac{\log
(h_{n}^{-d})\log(\log(n) \log(h_n^{-d}) h_{n}^{-d})}{nh_{n}^{d}}} (1+o(1)) .
\]
As before $R_{n,B}(\mathcal K)$ is the leading term among the terms that appear in the bound of Theorem \ref{th:expectation_bound}.
\end{proof}

Comparing the rate of convergence given in Theorem
\ref{th:unif_conv_kernel_1} with usual rate of $ \sqrt{|\log(h_n)|/(nh_n^d) }$ corresponding to the independent case, we see that the rate of the Markovian
setting are slightly poorer.  Even when the regeneration time has exponential moments, a loss of a factor $\log(n)^{1/2}$ is observed with respect to the independent case. This loss is due to the variance term that scales differently due to the block size. To fill this gap, we provide in the following theorem an additional assumption on the chain $X$ that ensures the same rate as in the independent case.

\begin{theorem}
\label{th:unif_conv_kernel_2} 
Assume that the chain $X\subset \mathbb R^d$ satisfies the generic hypothesis (H) and (EM), the stationary density $\pi$ is supposed to be bounded, the kernel $K $ is given by (\ref{assump:kernel}) and $K(x)\leq U$, for all $x\in \mathbb R^d$. Suppose that $h_n\to 0 $ and that $ \sqrt{|\log(h_n)|/(nh_n^d) }\to 0$, if there exist $p>2$ and $C>0$ such that for all $x\in E$, $\pi(x )
\mathbb{E}_{x} [\tau_{A}^{p}]\leq C$, then we have
\begin{align*}
\mathbb{E}_\nu\left[ \sup_{x\in\mathbb{R}^{d} }|\hat{\pi}_n(x)-\mathbb E_\pi [\hat \pi_n(x)]  | \right] = O\left(\sqrt
{\frac{ |\log( h_{n})| }{ n h_{n}^{d}} }\right) .
\end{align*}

\end{theorem}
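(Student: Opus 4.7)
The strategy parallels Theorem~\ref{th:unif_conv_kernel_1}(ii): apply Theorem~\ref{th:expectation_bound} to the VC class $\mathcal K_n=\{y\mapsto K((x-y)/h_n):x\in\mathbb R^d\}$, then bound the block Rademacher complexity via Theorem~\ref{th:rademacher_complexity}(ii). The non-Rademacher contributions in Theorem~\ref{th:expectation_bound} are of strictly smaller order than $nh_n^d\sqrt{|\log h_n|/(nh_n^d)}$ under the bandwidth hypothesis, using $\sup_{f\in\mathcal K_n}|\mathbb{E}_\pi f|=O(h_n^d)$ and the finiteness of $\mathbb{E}_A[\tau_A^2]$ under (EM). So it suffices to prove $R_{n,B}(\mathcal K_n)=O(\sqrt{nh_n^d|\log h_n|})$. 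This is tighter by a factor $\log(n)^{1/2}/|\log h_n|^{1/2}$ than the bound obtained in Theorem~\ref{th:unif_conv_kernel_1}(ii), and the entire gain will come from a sharper control of the variance proxy $\sigma'^2:=\sup_{f\in\mathcal K_n}\mathbb{E}_A[(\sum_{i=1}^{\tau_A} f(X_i))^2]$.

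The key step is to establish $\sigma'^2=O(h_n^d)$, with no inflated logarithm. Expand the square into a diagonal and twice an off-diagonal term. Pitman's occupation formula handles the diagonal immediately: $\mathbb{E}_A[\sum_{i=1}^{\tau_A} f(X_i)^2]=\mathbb{E}_A[\tau_A]\int f^2 d\pi\le \mathbb{E}_A[\tau_A]\|\pi\|_\infty h_n^d\int K^2$. For the off-diagonal, apply the strong Markov property at each deterministic time $i$: on $\{i<\tau_A\}$ the remaining excursion is distributed as an excursion started at $X_i$, giving
\[
\mathbb{E}_A\!\left[\sum_{1\le i<j\le\tau_A} f(X_i)f(X_j)\right] = \mathbb{E}_A\!\left[\sum_{i=1}^{\tau_A-1} f(X_i)\,g_f(X_i)\right], \qquad g_f(x):=\mathbb{E}_x\!\left[\sum_{j=1}^{\tau_A} f(X_j)\right].
\]
Bound $|g_f(x)|\le U\,\mathbb{E}_x[\tau_A]\le U(\mathbb{E}_x[\tau_A^p])^{1/p}$ by Jensen, apply Pitman once more to reduce the off-diagonal to $\mathbb{E}_A[\tau_A]\int|f(x)|\,(\mathbb{E}_x[\tau_A^p])^{1/p}\pi(x)\,dx$, and finally use
\[
\pi(x)(\mathbb{E}_x[\tau_A^p])^{1/p}=\pi(x)^{(p-1)/p}\bigl(\pi(x)\mathbb{E}_x[\tau_A^p]\bigr)^{1/p}\le \|\pi\|_\infty^{(p-1)/p}\,C^{1/p}
\]
to conclude that the off-diagonal is $\le U\,\mathbb{E}_A[\tau_A]\,C^{1/p}\|\pi\|_\infty^{(p-1)/p}\int|f(x)|\,dx=O(h_n^d)$. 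This is the only place where the new hypothesis is used.

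With $\sigma'^2=O(h_n^d)$ in hand, plug into Theorem~\ref{th:rademacher_complexity}(ii) with $L_n=4\log(n)/\lambda$: the tail term $nU\exp(-L_n\lambda/2)C_\lambda=O(U/n)$ is negligible and $\log(CL_nU/\sigma')=O(|\log h_n|)$, so the dominant contribution is $\sqrt{vn\sigma'^2\log(CL_nU/\sigma')}=O(\sqrt{nh_n^d|\log h_n|})$, the linear-in-$L$ term being subdominant under the bandwidth assumption. Dividing by $nh_n^d$ produces the claimed rate.

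The main obstacle is precisely the off-diagonal bound. The Cauchy--Schwarz/Jensen argument of Theorem~\ref{th:unif_conv_kernel_1}(ii) replaces $f(X_i)f(X_j)$ by $\tau_A f(X_i)^2$, which after the optimal truncation of $\tau_A$ costs a factor $\log(h_n^{-d})$ in $\sigma'^2$; eliminating this factor requires a genuinely more delicate treatment of the pairwise interactions inside an excursion. The strong Markov rewrite, coupled with $\pi(x)\mathbb{E}_x[\tau_A^p]\le C$, is exactly the mechanism that trades long excursion lengths against small values of $\pi$ and so recovers the i.i.d.\ rate.
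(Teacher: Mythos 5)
Your proof is correct and follows the paper's overall architecture exactly: reduce to the block Rademacher complexity via Theorem \ref{th:expectation_bound}, apply Theorem \ref{th:rademacher_complexity}(\ref{th:second_point}) with $L_n\asymp\log n$, and concentrate the entire improvement over Theorem \ref{th:unif_conv_kernel_1}(\ref{th:second_point_kernel}) in the sharpened variance bound $\sigma'^2=O(h_n^d)$. Where you genuinely diverge is in how that bound is obtained. The paper simply invokes Lemma 11 of \cite{azais:2016}, which gives $\mathbb E_A[(\sum_{i\le\tau_A}f(X_i))^2]\le C_1(\pi(f^2)+\mathbb E_A[f(X_0)^2\tau_A^p])$, and then feeds the hypothesis $\pi(x)\mathbb E_x[\tau_A^p]\le C$ into the second term. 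You re-derive such an inequality from scratch: Pitman's occupation formula for the diagonal, and for the off-diagonal the strong Markov identity $\mathbb E_A[\sum_{i<j\le\tau_A}f(X_i)f(X_j)]=\mathbb E_A[\sum_{i<\tau_A}f(X_i)g_f(X_i)]$ with $g_f(x)=\mathbb E_x[\sum_{j\le\tau_A}f(X_j)]$, followed by Jensen and the split $\pi(x)(\mathbb E_x[\tau_A^p])^{1/p}=\pi(x)^{(p-1)/p}(\pi(x)\mathbb E_x[\tau_A^p])^{1/p}\le\|\pi\|_\infty^{(p-1)/p}C^{1/p}$. This buys a self-contained, citation-free proof of the key estimate (and in fact only uses $p\ge1$ where the theorem assumes $p>2$), at the cost of essentially re-proving a known lemma; the underlying mechanism, trading long excursions against small values of $\pi$, is the same in both. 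One caveat you share with the paper: with $L_n\asymp\log n$ the logarithm coming out of Theorem \ref{th:rademacher_complexity} is $\log(CL_nU/\sigma')=O(\log\log n+|\log h_n|)$ and the linear-in-$L$ term is of order $\log n$ times that logarithm, so absorbing everything into $O(\sqrt{nh_n^d\,|\log h_n|})$ implicitly uses $\log\log n=O(|\log h_n|)$ and $nh_n^d\gtrsim(\log n)^2|\log h_n|$, neither of which is spelled out under the stated bandwidth hypotheses in either argument.
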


\begin{proof}
The main step is to show that there exists $c>0$ such that
\begin{align}
\label{cond:same_rate}\mathbb E _A \left[   \left( \sum_{i=1 }^{\tau_A} K((x-X_{i})/h_{n}) \right)^2\right]  \leq c h_{n}^{d},\qquad \text{for all } x\in E,
\end{align}
then the conclusion will follow straightforwardly. The fact that (\ref{cond:same_rate}) holds true follows from Lemma 11 in \cite{azais:2016}, which gives that, for any measurable function $f$,
\begin{align*}
\mathbb E _A \left[   \left( \sum_{i=1 }^{\tau_A}  f(X_i)\right)^2\right]  \leq C_1 ( \pi(f^2)  +\mathbb E_A [f(X_0)^2\tau_A^{p}]) ,
\end{align*}
with $C_1>0$. Whenever $f (X) = K((x-X)/h_n)$, we get that $\pi(f^2)\leq v_K \|\pi\|_\infty h_n^d $, where $\|\pi\|_\infty = \sup_{x\in \mathbb R^d}|\pi(x)|$, and defining $g(x) =  \pi(x ) \mathbb E_x [\tau_A^p]$ we get
\begin{align*}
\mathbb E_A[f(X_0)^2\tau_A^{p}]  = \int f(y ) g(y) \,d y \leq C v_K  h_n^d.
\end{align*}
Hence we have obtained (\ref{cond:same_rate}). It follows from Theorem \ref{th:rademacher_complexity} that
\begin{align}\label{bound:L_2}
R_{n,B}(\mathcal K)\leq C_3 \left(\frac{ L \log( CLU c^{-1} h_n^{-d})}{nh_n^d} +  \sqrt {\frac{\log( CLU c^{-1}h_n^{-d}) }{ n h_n^{d}} }   +  \frac{ \exp(-L\lambda/2) }{ h_n^d }  \right).
\end{align}
Setting $L_n = 2 \log(n) /\lambda$ we obtain the desired result by applying Theorem \ref{th:expectation_bound}.
\end{proof}

\begin{remark}[on the bandwidth]
In the independent case, given $x\in \mathbb R$, the variance of $\hat f(x)$ is ensured to vanish whenever $nh_n^d\to +\infty$, and asking for $nh_n^d/|\log(h_n)| \to +\infty $ is a slight additional requirement to ensure that convergence happens uniformly over $\mathbb R$. In Theorem \ref{th:unif_conv_kernel_2}, the assumptions on the bandwidth are the same as in the independent case. In Theorem \ref{th:unif_conv_kernel_1}, the fact that $h_n\geq n^{-\beta}$ is slightly stronger.
\end{remark}


\subsection{Metropolis-Hasting algorithm}\label{sec:applications_2}

Bayesian estimation requires to compute moments of the so called \textit{posterior distribution} whose probability density function $\pi$ is given by
\begin{align*}
 \pi(\theta) = \frac{ \mathcal L (\theta)}{\int  \mathcal L (\theta)d \theta }\qquad \theta\in \mathbb R^d, 
\end{align*}
where $ \mathcal L $ is a positive function which stands for the likelihood of the observed data. The quantities of interest writes as $  \int gd \pi$, for some given measurable functions $g:\mathbb R^d\to \mathbb R$, and are unfortunately unknown. A particular feature in this framework is that the integral at the denominator of $\pi$ is unknown and difficult to compute making impossible to generate observations directly from $\pi$. Markov Chains Monte Carlo (MCMC) methods aim to produce samples $X_1,\ldots,X_n$ in $\mathbb R^d$ that are approximately distributed according to $\pi$. Then $\int gd \pi$ is classically approximated by the empirical average over the chain :
\begin{align*}
 n^{-1} \sum_{i=1} ^n g(X_i).
\end{align*}
For inference, Bayesian credible intervals are usually computed using the quantiles the coordinate chains (see below). We refer to \cite{robert:2004} for a complete description of MCMC methods. In what follows, we focus on the special MCMC method called Metropolis-Hasting (MH). Aim is to derive new concentration inequalities for suprema of $\sum_{i=1} ^n g(X_i)$ over $g$ in some VC classes. We show in particular that our results provide convergence rates for the estimation of Bayesian credible intervals. 

Let us introduce the MH algorithm with target density $\pi : \mathbb R^d \to \mathbb R_{\geq 0}$ and proposal $Q(x,dy) = q(x,y) dy$, where $q$ is a positive function defined on $\mathbb R^d\times \mathbb R^d$ satisfying $\int q(x,y)dy = 1$. Define for any $(x,y)\in \mathbb R^d\times \mathbb R^d$,
\begin{align*}
\rho (x,y) = \left\{\begin{array}{ll}
\min \left(1, \frac{\pi(y)q(y,x)}{\pi(x)q(x,y)} \right)&\quad \text{if } \pi(x)q(x,y) >0, \\
1 &\quad \text{if } \pi(x)q(x,y) =0.
\end{array}\right. 
\end{align*}
The MH chain moves from $X_n$ to $X_{n+1}$ according to the following rule : 
\begin{enumerate}[(i)]
\item Generate 
\begin{align*}
&Y\sim Q(X_n, dy)\qquad \text{and} \qquad B\sim\mathcal B (\rho(X_n, Y)).
\end{align*}
\item Set
\begin{align*}
X_{n+1} = \left\{\begin{array}{lll} Y \quad &\text{if} \quad B = 1, \\
X_{n} \quad &\text{if} \quad B = 0.
\end{array} \right.
\end{align*}
\end{enumerate}
In the particular case that $q(x,y) = q(x-y) $, the algorithm is refereed to as the random walk MH. We call the chain $X$ the random walk MH chain. 

The asymptotic behavior  of the random walk MH algorithm has been studied in \cite{roberts+t:1996,jarner+h:2000} where central limit theorems are established based on the geometric ergodicity of the chain. From Remark \ref{remark:geo_ergo}, the results in \cite{roberts+t:1996,jarner+h:2000} imply that (EM) is satisfied. This allows to apply Theorem \ref{th:rademacher_complexity} almost directly for the random walk MH. For the sake of completeness, we provide the following alternative development, in which we verify (EM) via the (uniform) Doeblin condition. Contrary to \cite{roberts+t:1996,jarner+h:2000}, we focus on $\pi$ with bounded support. Hence we start by giving (in the next coming theorem) a condition implying the Doeblin condition. This condition will be easily verified for the random walk MH chain.

Denote by $B(x,\epsilon)$ the ball with centre $x$ and radius $\epsilon$ with respect to the Euclidean distance $\|\cdot\|$.


\begin{proposition}\label{prop:unif_doeblin}
Let $P$ be a transition kernel. Let $\Phi$ be a positive measure on $(\mathbb R^d,\mathcal B(\mathbb R^d))$. Suppose that $E=\supp(\Phi)$ is bounded and convex with nonempty interior. Suppose that there exists $\epsilon>0$ such that $\forall x\in E$, $P(x,dy) \geq \mathrm 1_{ B(x,\epsilon)}(y) \Phi(dy) $. Then there exists $C>0$ and $n\geq 1$ such that for any $x\in E$ and any measurable set $A\subset E$,
\begin{align}\label{eq_unif_doeblin}
P^n(x,A)\geq  C \Phi(A).
\end{align}

\end{proposition}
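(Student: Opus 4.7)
The plan is to iterate the minorization $n$ times along a discretized straight-line path joining $x$ to the target set $A$. Writing
\[
P^n(x,A) \;\geq\; \int_{E^{n}} \mathrm 1_A(y_n) \prod_{k=0}^{n-1} \mathrm 1_{B(y_k,\epsilon)}(y_{k+1}) \prod_{k=1}^{n} \Phi(dy_k)
\]
with the convention $y_0 = x$, and then peeling off $\Phi(dy_n)$ via Fubini, it suffices to bound the inner integral in $y_1,\ldots,y_{n-1}$ from below by a constant $C > 0$ uniform in $x, y_n \in E$.

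The key auxiliary ingredient will be the uniform lower bound
\[
c \;:=\; \inf_{z \in E} \Phi\bigl(B(z, \epsilon/4)\bigr) \;>\; 0.
\]
Since $E = \supp(\Phi)$ is closed and bounded, it is compact. The map $z \mapsto \Phi(B(z, r))$ is lower semicontinuous: for $z_m \to z$ and $r' < r$ one has $B(z,r') \subset B(z_m, r)$ once $m$ is large, hence $\liminf_m \Phi(B(z_m,r)) \geq \Phi(B(z,r'))$, and letting $r' \uparrow r$ invokes continuity of $\Phi$ from below on the increasing union. Positivity of $\Phi(B(z, \epsilon/4))$ at every $z \in E$ is exactly the defining property of the support, so this lower semicontinuous function attains its infimum $c$ on the compact $E$, and $c > 0$.

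The remainder will be a convexity/discretization argument. Fix $n \geq \lceil 4\,\mathrm{diam}(E)/\epsilon \rceil$. Given $x, y_n \in E$, convexity of $E$ forces the whole segment $[x, y_n]$ into $E$; set $z_k = x + (k/n)(y_n - x)$ for $k = 0, \ldots, n$, so each $z_k \in E$ and $\|z_k - z_{k-1}\| \leq \epsilon/4$. Restricting the inner integral to $y_k \in B(z_k, \epsilon/4)$ for $k = 1,\ldots,n-1$, the triangle inequality yields $\|y_{k+1} - y_k\| \leq 3\epsilon/4 < \epsilon$ for every $k$, so every indicator factor equals $1$. The inner integral is thus at least $\prod_{k=1}^{n-1} \Phi(B(z_k, \epsilon/4)) \geq c^{n-1}$, and (\ref{eq_unif_doeblin}) follows with $C = c^{n-1}$.

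The main obstacle is securing the uniform positivity of $c$; once that is in hand, the discretization along $[x,y_n]$ is routine because $n$ depends only on $\mathrm{diam}(E)$ and $\epsilon$, not on the endpoints. Beyond its defining support property and the fact that it is supported on a compact convex set, no further regularity of $\Phi$ is used.
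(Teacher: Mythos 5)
Your proof is correct and follows essentially the same route as the paper's: both arguments chain the minorization through roughly $4\,\mathrm{diam}(E)/\epsilon$ intermediate points placed on the segment joining $x$ to the target point (convexity of $E$ keeping them inside the support) and reduce everything to the uniform lower bound $\inf_{z\in E}\Phi(B(z,\epsilon/4))>0$ obtained from compactness of $\supp(\Phi)$. The differences are only in packaging — you discretize the segment explicitly and restrict the iterated integral to a tube around it, where the paper iterates a one-step ``spreading'' inequality whose reach grows by $\epsilon/4$ per application — and your justification of the positive infimum via lower semicontinuity is, if anything, slightly more careful than the paper's appeal to continuity of $m\mapsto\Phi(B(m,\gamma/4)\cap E)$.
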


\begin{proof}

We decompose the proof according to $4$ steps.

\noindent\textit{First step:}
Let $0<\gamma\leq \eta$. There exists $c>0$ such that for any 
 $(x,y) \in E$, it holds that
\begin{align}\label{eq:first_step}
\int  \mathrm 1_{ B(x,\eta)   }(x_1) \mathrm 1_{ B(y, \gamma)  } (x_1) \,\Phi(d x_1)    \geq c \mathrm 1_{  B(x, \eta + \gamma/4)  }(y)  .
\end{align}
To obtain the previous statement, we can restrict our attention to the case when $ y \in B(x, \eta + \gamma/4)  $. Else the inequality is trivial. Note that there exists a point $m$ lying strictly in the line segment between $x$ and $y$ such that
\begin{align*}
B ( m, \gamma/4   ) \subset \{  B(x,\eta ) \cap  B(y,\gamma )\}.
\end{align*}
By convexity of $E$, $m\in E$. Hence
\begin{align*}
\int  \mathrm 1_{ B(x,\eta)   }(x_1) \mathrm 1_{ B(y, \gamma)  } (x_1) \Phi(d x_1)     &\geq \Phi \{B ( m, \gamma/4   )  \} \geq \inf_{m\in E} \Phi \{ B ( m, \gamma/4   )   \}.
\end{align*}
But he function $m\mapsto \Phi (B ( m, \gamma/4   )\cap E )$ is continuous on $E$ and positive for each $m\in E$, by definition of the support and the fact that $m$ is an interior point of $E$ by convexity.

\noindent\textit{Second step:} We iterate (\ref{eq:first_step}) to obtain the following statement.
For any $n\geq 1$, there exists $C_n>0$ such that for any $(x,y) \in E$, it holds that
\begin{align*}
&\int \ldots \int \mathrm 1_{B(x,\epsilon)   }(x_1) \mathrm 1_{B(x_2,\epsilon)   }(x_1) \ldots    \mathrm 1_{B(x_n,\epsilon)   }(x_{n-1})     \mathrm 1_{B(y,\epsilon)   }(x_n) \,  \Phi(d x_1) \ldots \Phi(d x_n)  \\
&\geq C_n \mathrm 1_{B(x , \epsilon (1 + n/4))    }(y).
\end{align*}

\noindent\textit{Third step:} Take $n$ such that $ \epsilon (1 + n/4)  > \sup_{(x,y)\in E} \|x-y\| $. Then for any $x\in E$ and $y\in E$, $ y\in B(x , \epsilon (1 + n/4))     $. It follows that there exists $C_n>0$ such that for all $(x,y) \in E$,
\begin{align*}
&\int \ldots \int \mathrm 1_{B(x,\epsilon)   }(x_1) \mathrm 1_{B(x_2,\epsilon)   }(x_1) \ldots    \mathrm 1_{B(x_n,\epsilon)   }(x_{n-1})     \mathrm 1_{B(y,\epsilon)   }(x_n) \,  \Phi(d x_1) \ldots \Phi(d x_n) \geq C_n.
\end{align*}

\noindent\textit{Fourth step:}
Using the last step and the assumption on $P$, it holds that for any $x\in E$ and any measurable set $A\subset E$,
\begin{align*}
&P^n(x,A)\\
&\geq \int \mathrm 1_{B(x,\epsilon)   }(x_1)  \ldots    \mathrm 1_{B(x_n,\epsilon)   }(x_{n-1})     \mathrm 1_{B(y,\epsilon)   }(x_n) \mathrm 1_{\{ y\in A   \}  } \,  \Phi(d x_1) \ldots \Phi(d x_n)  \Phi(d y) \\
&\geq C_n \Phi(A).
\end{align*}

\end{proof}

Applying the previous result to the random walk MH, we obtain that the random walk MH verifies (EM). This is the main conclusion of the following statement.

\begin{proposition}\label{prop:exp_moment_MH}
Let $\pi  $ be a bounded probability density supported by $E\subset \mathbb R^d$, a bounded and convex set with non-empty interior.  Suppose that there exists $b>0$ such that $\forall (x,y) \in \mathbb R^d\times \mathbb R^d$, $ q(x,y) \geq b \mathrm 1_{B(x, \epsilon)   }(y) $. Then the MH chain verifies ({H}) and (EM).
\end{proposition}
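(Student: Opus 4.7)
The plan is to verify the uniform Doeblin condition by applying Proposition \ref{prop:unif_doeblin}, and then deduce both (H) and (EM) from it --- the latter via the equivalence with geometric ergodicity noted in Remark \ref{remark:geo_ergo}.

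First I would derive a one-step minorization of the MH kernel. The acceptance part of the chain gives $P(x,dy) \geq q(x,y)\rho(x,y)\,dy$, and the identity $q(x,y)\rho(x,y) = \min(q(x,y),\, \pi(y)q(y,x)/\pi(x))$ combined with the hypothesis $q(x,y) \geq b\,\mathrm 1_{B(x,\epsilon)}(y)$ yields, for $y\in B(x,\epsilon)$,
\begin{align*}
q(x,y)\rho(x,y) \;\geq\; b \min\bigl(1,\, \pi(y)/\pi(x)\bigr) \;\geq\; (b/\|\pi\|_\infty)\,\pi(y),
\end{align*}
where the last inequality uses the boundedness of $\pi$. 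Setting $\Phi(dy) = (b/\|\pi\|_\infty)\pi(y)\,dy$ produces a positive measure on $\mathbb R^d$ with $\supp(\Phi) = E$, and one obtains $P(x,dy) \geq \mathrm 1_{B(x,\epsilon)}(y)\Phi(dy)$ for every $x\in E$ --- exactly the hypothesis of Proposition \ref{prop:unif_doeblin}.

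Second, applying Proposition \ref{prop:unif_doeblin} yields $C>0$ and $n\geq 1$ such that $P^n(x,A)\geq C\Phi(A)$ for every $x\in E$ and every measurable $A\subset E$. This is a uniform Doeblin condition: $E$ itself is $\Psi$-small for $P^n$, where $\Psi$ is the probability measure proportional to $\pi$. Standard results on uniformly ergodic chains (e.g., Theorem 16.0.2 in \cite{meyn+t:2009}) then give $\psi$-irreducibility with $\psi\propto \pi$, aperiodicity, positive Harris recurrence, the hitting-time bounds in (\ref{tauint}), and the exponential tail $\mathbb E_A[\exp(\lambda\tau_A)]<\infty$ for some $\lambda>0$. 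Hence (H) and (EM) both follow, the reduction from $m=n$ to the standing assumption $m=1$ being handled as in Remark \ref{remark:1_dependent}.

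The main obstacle is the first step: the MH ratio $\rho$ entangles $\pi(x)$ and $\pi(y)$ in a way that at first sight resists a uniform-in-$x$ lower bound. The key observation is that $\pi(x)$ can be bounded above by $\|\pi\|_\infty$ and absorbed into the constant of the minorizing measure, leaving a density proportional to $\pi(y)$ --- which is exactly the shape needed to feed into Proposition \ref{prop:unif_doeblin}. Everything else is either a direct invocation of that proposition or a standard consequence of the uniform Doeblin condition.
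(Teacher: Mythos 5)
Your proposal is correct and follows essentially the same route as the paper: a one-step minorization $P(x,dy)\geq \mathrm 1_{B(x,\epsilon)}(y)\,\Phi(dy)$ with $\Phi$ proportional to $\pi$, followed by Proposition \ref{prop:unif_doeblin} and Theorem 16.0.2 of Meyn and Tweedie to get irreducibility, Harris recurrence and (EM). Your derivation of the minorization is in fact slightly more careful than the paper's (which asserts $\rho(x,y)\geq \pi(y)/\|\pi\|_\infty$ without using $q(y,x)\geq b$), and where the paper obtains aperiodicity and an $m=1$ small set directly from the one-step bound restricted to a ball $B(z,\epsilon/2)$, you obtain the same conclusions from the uniform Doeblin condition together with Remark \ref{remark:1_dependent}; both are fine.
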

\begin{proof}

Because $\rho(x,y)\geq \pi(y)/\|\pi\|_\infty$, the Markov kernel $P$ of the MH chain verifies, for any $x\in E$,
\begin{align}\label{eq:mh_key}
P(x,dy) \geq \rho(x,y) Q(x,d y) \geq \|\pi\|_\infty^{-1}  \mathrm 1_{ B(x ,\epsilon)   }(y) \pi(y) d y.
\end{align}

Let $z\in E$. From (\ref{eq:mh_key}), whenever $x\in  B(z,\epsilon/2)$ and $A\in \mathcal B(\mathbb R^d) $,  
\begin{align*}
P(x,A)\geq \|\pi\|_\infty^{-1}  \pi(A\cap B(z,\epsilon/2) ) .
\end{align*}
This means that any ball with positive radius is $\pi_{| B(z,\epsilon/2)}$-small with $m=1$. Following \cite[proof of Theorem 2.2]{roberts+t:1996}, this implies the aperiodicity of the chain.

Applying Proposition \ref{prop:unif_doeblin} with $\Phi(dy) = \|\pi\|_\infty^{-1} \pi(y) d y$, we deduce that whenever $\pi(A)>0$, there exists $n\geq 1$ such that $P^n(x,A)>0$. This is $\pi$-irreducibly.

Applying Proposition \ref{prop:unif_doeblin} with $\Phi(dy) = \|\pi\|_\infty^{-1} \pi(y) d y$, we obtain (\ref{eq_unif_doeblin}) which implies (EM) in virtue of Theorem 16.0.2 in \cite{meyn+t:2009}. More precisely, in their Theorem 16.0.2, (\ref{eq_unif_doeblin}) implies point (iv) which is equivalent to point (vii). That is, we have shown that whenever $\psi(B)>0$, there is $\lambda_B>0 $ such that $\sup_{x\in E} \mathbb E_x [\exp(\lambda_B\tau_B)]<\infty$. This is stronger than positive Harris recurrence. The latter is true for any atom $A$ of the extended chain as well. 

\end{proof}

Based on Proposition \ref{prop:exp_moment_MH}, we are in position to apply point (\ref{th:second_point}) of Proposition \ref{th:rademacher_complexity} to the random walk MH.

\begin{proposition}\label{bound:vc_class_mh}
Let $\mathcal{G}$ be a countable VC class of measurable functions on
$S$ bounded by $U$ with characteristics $(C,v)$. Let $\pi  $ be a bounded probability density supported by $E\subset \mathbb R^d$, a bounded and convex set with nonempty interior.
Suppose that there exists $b>0$ such that $\forall (x,y) \in \mathbb R^d\times \mathbb R^d$, $ q(x,y) \geq b \mathrm 1_{  B(x,\epsilon)     }(y) $. Then, for all $n\geq 1$, it holds
\begin{align*}
&\mathbb E \left[   \sup_{g\in \mathcal G} \left|  \sum_{i=1}^n \left(g(X_i ) - \pi (g)   \right)\right|  \right]\leq  D \sqrt{ n (1\vee \log(  \log(n)))} ,
\end{align*}
where $D$ depends only on $v,C, U$ and on the tails of the regeneration time. Moreover, for any $n/\log(n)\geq {\tau^{3}} /   (U \mathbb{E}_{A}[\tau_{A}^2])$, we have with probability $1-\delta$,
\begin{align*}
&\sup_{g\in \mathcal G} \left| \sum_{i=1}^n \left( g(X_i) - \pi(g)    \right)\right|  \leq \\
&KD \sqrt{ n  (1\vee \log(\log(n)))}  + \max\left(   \sqrt{ nU^2 \mathbb E_A[\tau _A^{2}  ] K\log\left(\frac
{K}{\delta}\right)},\log\left(\frac{K}{\delta}\right)\frac{\tau^{3}U\log(n)}{\mathbb{E}_{A}[\tau_{A}
]}\right),
\end{align*}
where $K>0$ is a universal constant.
\end{proposition}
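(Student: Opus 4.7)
The plan is to check the hypotheses of the two master results (Theorem~\ref{th:expectation_bound} and Theorem~\ref{th:proba_bound}) and then plug in the bound on the block Rademacher complexity provided by point~(\ref{th:second_point}) of Theorem~\ref{th:rademacher_complexity}. First, Proposition~\ref{prop:exp_moment_MH} yields that the random walk MH chain satisfies (H) and (EM); in particular, there exists $\lambda>0$ such that $\mathbb E_A[\exp(\lambda \tau_A)]<\infty$, and, starting the chain from any point of the small set (which is any ball inside $E$), the hypothesis $\mathbb E_\nu[\exp(\lambda \tau_A)]<\infty$ required in Theorem~\ref{th:proba_bound} is also met.

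Next I control the block variance. Since $|g|\leq U$, Cauchy--Schwarz gives
\[
\mathbb E_A\!\left[\Bigl(\sum_{i=1}^{\tau_A}g(X_i)\Bigr)^{2}\right]\leq U^{2}\,\mathbb E_A[\tau_A^{2}],
\]
so we may take $\sigma'^{2}=U^{2}\,\mathbb E_A[\tau_A^{2}]$, which is finite thanks to (EM). Apply Theorem~\ref{th:rademacher_complexity}(\ref{th:second_point}) with the calibrated truncation level $L_{n}=2\log(n)/\lambda$. With this choice, the residual term $nU\exp(-L_n\lambda/2)C_\lambda = U\,C_\lambda$ is $O(1)$, the ``entropy'' term $vL_nU\log(CL_nU/\sigma')$ is of order $\log(n)\log(\log(n))$, and the dominant contribution is
\[
\sqrt{vn\sigma'^{2}\log(CL_nU/\sigma')}\;=\;O\!\left(\sqrt{n\,(1\vee\log(\log(n)))}\right),
\]
since $\log(CL_nU/\sigma')=O(1\vee\log(\log(n)))$. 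Thus there is a constant $D$ depending only on $v$, $C$, $U$, $\lambda$ and $\mathbb E_A[\tau_A^{2}],\mathbb E_A[\exp(\lambda\tau_A)]$ such that $R_{n,B}(\mathcal G)\leq D'\sqrt{n(1\vee\log\log n)}$.

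Now invoke Theorem~\ref{th:expectation_bound}. The term $4R_{n,B}(\mathcal G)$ gives the announced order $\sqrt{n(1\vee\log\log n)}$, while $4\sup_g|\pi(g)|\sqrt{n\,\mathbb E_A[\tau_A^{2}]}\leq 4U\sqrt{n\,\mathbb E_A[\tau_A^{2}]}$ is $O(\sqrt{n})$ and the remainder $2U(\mathbb E_\nu[\tau_A]+\mathbb E_A[\tau_A])$ is $O(1)$; all of these are absorbed into $D\sqrt{n(1\vee\log\log n)}$ for a suitable $D$, which proves the expectation bound. For the deviation statement, define $R_n$ by the right-hand side of the expectation bound in Theorem~\ref{th:expectation_bound}, so that $R_n\leq D\sqrt{n(1\vee\log\log n)}$ up to the same constants. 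Plugging this $R_n$ and $\sigma'^{2}=U^{2}\mathbb E_A[\tau_A^{2}]$ into Theorem~\ref{th:proba_bound} and rewriting the conclusion in the high-probability form yields exactly the stated inequality.

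The only step that requires care is the calibration of $L_n$: one has to balance the exponential tail contribution $nU\exp(-L_n\lambda/2)$ against the $L_n$-linear entropy term, which is why $L_n\asymp \log(n)$ is the correct scaling and produces the $\log\log n$ factor in the leading $\sqrt{n}$-term. Everything else is bookkeeping of constants from the cited theorems.
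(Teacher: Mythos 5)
Your proposal is correct and follows essentially the same route as the paper: set $\sigma'^2 = U^2\mathbb E_A[\tau_A^2]$, apply Theorem~\ref{th:rademacher_complexity}(\ref{th:second_point}) with $L_n = 2\log(n)/\lambda$ so that the tail term becomes $UC_\lambda = O(1)$ and the $\sqrt{n(1\vee\log\log n)}$ term dominates, then conclude via Theorems~\ref{th:expectation_bound} and~\ref{th:proba_bound}. Your write-up is in fact slightly more careful than the paper's, since you explicitly verify (H), (EM) and the initial-measure moment condition via Proposition~\ref{prop:exp_moment_MH}.
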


\begin{proof}
Set $\sigma^{\prime 2} =  U^2 \mathbb E_A[\tau _A^{2}  ] $ and apply Theorem \ref{th:rademacher_complexity} to get that 
\[
R_{n,B}(\mathcal{F}^{\prime})\leq M\left[  v LU\log\frac{CL }{ \mathbb E_A[\tau _A^2  ] ^{1/2}}+\sqrt{v n  U^2 \mathbb E_A[\tau _A^2  ]  \log\frac{CL }{ \mathbb E_A[\tau _A^2  ] ^{1/2}}}\right]  +nU\exp(-L\lambda/2)C_{\lambda}.
\]
Take $L=2\log(n)/\lambda$ to obtain
\begin{align*}
R_{n,B}(\mathcal{F}^{\prime})& \leq M\left[  2\log(n) v U\log(A\log(n))/\lambda +\sqrt{v n U^2 \mathbb E_A[\tau _A^2  ]  \log(A\log(n))
}\right]  +U C_{\lambda}
\end{align*}
with $ A = 2 C/ (\lambda \mathbb E _A [\tau _A^2 ]^{1/2}) $. We obtain the first stated result by straightforward manipulations. The second result is a direct consequence of Theorem \ref{th:proba_bound}.
\end{proof}

Let $k\in \{1,\ldots, d\}$ and denote by $X_i^{(k)}$ the $k$-th coordinate of $X_i$. Define the associated empirical cumulative distribution function 
for any $t\in \mathbb R $,
\begin{align*}
&\hat \Pi_k (t) =n^{-1} \sum_{i=1} ^n \mathrm 1 _{\{  X_i^{(k)} \leq t\}}.
\end{align*}
and the quantile function, for any $u\in (0,1)$,
\begin{align*}
&\hat Q_k(u) = \inf\{x\in \mathbb R \,:\, \hat F_k(x) \geq u \}.
\end{align*}
As a corollary of the previous result, we obtain an upper bound for the estimation error of Bayesian credible intervals  defined as $[\hat Q_k(u),\hat Q_k(1-u)]$.
The targeted interval is $[ Q_k(u), Q_k(1-u)]$, where $Q_k$ is the true quantile function of the posterior marginal distribution $\Pi_k$ whose associated density is denoted $\pi_k$.  

\begin{proposition}
Let $\pi  $ be a bounded probability density supported by $E\subset \mathbb R^d$, a bounded and convex set with nonempty interior.
Suppose that there exists $b>0$ such that $\forall (x,y) \in \mathbb R^d\times \mathbb R^d$, $ q(x,y) \geq b \mathrm 1_{  B(x,\epsilon)    }(y) $. For any $0< \gamma <1/4$, $1\leq k\leq d$, and any $n/\log(n)\geq {\tau^{3}} / \mathbb{E}_{A}[\tau_{A}^2]  $, we have with probability $1-\delta$,
\begin{align*}
&b_{k,\gamma} \sup_{u\in [2\gamma,1-2\gamma]} \left|  \hat Q_k (u) - Q_k(u)   \right|   \leq \\
&KD \sqrt{ n   (1\vee \log(\log(n))) }  + \max\left( \sqrt{ n \mathbb{E}_{A}[\tau_{A}^2]   K\log\left(\frac
{K}{\delta}\right)},\log\left(\frac{K}{\delta}\right)\frac{\tau^{3}\log(n)}{\mathbb{E}_{A}[\tau_{A}%
]}\right)
\end{align*}
where $b_{k,\gamma} = \inf_{u\in [\gamma, 1-\gamma] } \pi_k(Q_k(u)) $ and $D$ depends only on $v,C, U$ and the tails of the regeneration time. In particular, provided that $b_{k,\gamma} >0$,
\begin{align*}
 \sup_{u\in [2\gamma,1-2\gamma]} \left|  \hat Q_k(u) - Q_k(u)   \right|   =  O_{\mathbb P } \left( \sqrt {\frac{\log\log n }{n} } \right).
\end{align*}
\end{proposition}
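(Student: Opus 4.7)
The strategy is to apply Proposition \ref{bound:vc_class_mh} to the canonical VC class of indicators of coordinate half-lines, obtain a Kolmogorov--Smirnov-type concentration bound on $\hat\Pi_k-\Pi_k$, and then convert it into a bound on the quantile function via Lipschitz inversion of the cumulative distribution function, driven by the density lower bound $b_{k,\gamma}$.

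First I would introduce the class $\mathcal G=\{x\mapsto \mathbf 1_{\{x^{(k)}\le t\}}:t\in\mathbb Q\}$. Restricting $t$ to the rationals makes $\mathcal G$ countable, and right-continuity of $\hat\Pi_k$ and $\Pi_k$ ensures that this does not change the relevant suprema. The class $\mathcal G$ has constant envelope $U=1$ and fixed, universal VC characteristics $(C,v)$. The hypotheses on $\pi$ and $q$ are exactly those of Proposition \ref{prop:exp_moment_MH}, so the MH chain satisfies (H) and (EM) and Proposition \ref{bound:vc_class_mh} applies. This delivers, on an event of probability at least $1-\delta$ and for $n/\log n$ large enough, a bound of the form $n\sup_{t\in\mathbb R}|\hat\Pi_k(t)-\Pi_k(t)|\le R_n$, where $R_n$ is exactly the right-hand side written in the proposition.

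Next I would localise the empirical quantile. Writing $r_n=R_n/n$ and taking $n$ large enough so that $r_n\le \gamma$, monotonicity of $\Pi_k$ together with $|\hat\Pi_k(Q_k(\gamma))-\gamma|\le r_n$ and $|\hat\Pi_k(Q_k(1-\gamma))-(1-\gamma)|\le r_n$ forces $\hat Q_k(u)\in[Q_k(\gamma),Q_k(1-\gamma)]$ for every $u\in[2\gamma,1-2\gamma]$. On that interval $\pi_k\ge b_{k,\gamma}$ by the very definition of $b_{k,\gamma}$. Combining the standard estimate $|\hat\Pi_k(\hat Q_k(u))-u|\le 1/n$ with the uniform closeness of $\hat\Pi_k$ to $\Pi_k$ yields $|\Pi_k(\hat Q_k(u))-u|\le r_n+1/n$, and the mean value theorem applied to $\Pi_k$ between $\hat Q_k(u)$ and $Q_k(u)$ gives
\begin{align*}
b_{k,\gamma}\,|\hat Q_k(u)-Q_k(u)|\le |\Pi_k(\hat Q_k(u))-\Pi_k(Q_k(u))|\le r_n+1/n.
\end{align*}
Taking the supremum over $u\in[2\gamma,1-2\gamma]$ and rescaling by $n$ produces the displayed inequality of the proposition. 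The ``in particular'' $O_{\mathbb P}(\sqrt{\log\log n/n})$ statement then follows by keeping only the dominant term $\sqrt{n(1\vee \log\log n)}/n$ on the right-hand side.

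The main obstacle is the localisation step: one has to argue that with high probability $\hat Q_k(u)$ remains inside the compact interval $[Q_k(\gamma),Q_k(1-\gamma)]$ on which $\pi_k$ is bounded below by $b_{k,\gamma}$, before any Lipschitz-inversion argument can be invoked. Once this is in hand, the inversion is routine, and all of the probabilistic content has been absorbed into Proposition \ref{bound:vc_class_mh} applied to the half-line class.
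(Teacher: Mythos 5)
Your proof follows the same two-step route as the paper: apply Proposition \ref{bound:vc_class_mh} to the class of half-line indicators (with $U=1$, $C=v=2$, justified by the covering-number bound for $\{\mathrm 1_{(-\infty,t]}\,:\,t\in\mathbb R\}$) to obtain a uniform bound on $\hat\Pi_k-\Pi_k$, then invert to quantiles using the density lower bound $b_{k,\gamma}$. The only difference is cosmetic: the paper performs the inversion by citing a generalized-inverse comparison lemma, whereas you do it by hand via localisation plus the mean value theorem, which is equivalent.

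One step as written needs repair. The estimate $|\hat\Pi_k(\hat Q_k(u))-u|\le 1/n$ is a fact about empirical c.d.f.s of samples without ties; for the Metropolis--Hastings chain, rejected proposals produce repeated values $X_{i+1}=X_i$, so $\hat\Pi_k$ can jump by more than $1/n$ at $\hat Q_k(u)$ and the bound can fail. The fix is to avoid evaluating $\hat\Pi_k$ at its own quantile: since $\Pi_k$ is continuous, $\Pi_k(\hat Q_k(u))=\lim_{x\uparrow \hat Q_k(u)}\Pi_k(x)\le \lim_{x\uparrow \hat Q_k(u)}\hat\Pi_k(x)+r_n\le u+r_n$ (because $\hat\Pi_k(x)<u$ for all $x<\hat Q_k(u)$), while $\Pi_k(\hat Q_k(u))\ge \hat\Pi_k(\hat Q_k(u))-r_n\ge u-r_n$. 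Hence $|\Pi_k(\hat Q_k(u))-u|\le r_n$, and the remainder of your localisation and mean-value argument goes through unchanged (indeed without the spurious $1/n$ term).
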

\begin{proof}
Start by recalling the classical result; see e.g., \cite[Lemma 12]{portier:2017}; that whenever $F$ and $G$ are two cumulative distribution functions satisfying that $\sup_{t\in \mathbb R} |F(t)-G(t)| \leq \gamma$, then for any $u\in [\gamma,1- \gamma]$,
\begin{align*}
|F^- (u) - G^{-} (u) | &\leq \sup_{|\delta|\leq \gamma}  | G^-(u+\delta) - G^-(u) |,
\end{align*}
where $F^-$ and $G^-$ stands for the generalized inverse of $F$ and $G$. 
Consequently, if $G$ has a strictly positive density on $[F^-(\gamma), F^-(1-\gamma) ]$, we get that, for any $u\in [2\gamma,1- 2\gamma]$,
\begin{align*}
|F^- (u) - G^{-} (u) | &\leq \gamma \sup_{ u \in[\gamma, 1-\gamma] } \frac{\partial}{\partial u} G^{-}(u) = \gamma  \left(  \inf_{u\in [\gamma, 1-\gamma] } f(F^{-}(u)) \right)^{-1} .
\end{align*}
The previous is applied to $F=\hat  \Pi_k$, $G = \Pi_k$ and $\gamma$ taken from the second bound in Proposition \ref{bound:vc_class_mh}. In virtue of Example 2.5.4 in \cite{wellner1996}, we have that
\begin{align*}
\mathcal N ( \epsilon , \{\mathrm 1 _{(-\infty, t]} \,:\, t\in \mathbb R \} , \|\cdot\| _{L_2(Q)})  \leq 2\epsilon^{-2},
\end{align*}
which allows to apply Proposition \ref{bound:vc_class_mh} with $U=1$, $C = v=2$. 

\end{proof}

\begin{remark}
In contrast with the study of kernel density estimator given in section \ref{sec:applications_1}, the approach taken in this section cannot take advantage of classes with small variance. In particular, in Theorem \ref{bound:vc_class_mh}, the variance over the class $\mathcal G$ is crudely bounded by $U\mathbb{E}_{A}[\tau_{A}^2] $. This does not generate any loss for the final application to credible intervals because the underlying class functions is  $\{\mathrm 1 _{(-\infty, t]} \,:\, t\in \mathbb R \}$ with variance $\Pi(t)(1-\Pi(t))$ whose maximum is $1/4$.
\end{remark}

\paragraph{Acknowledgment} The authors are grateful to Gabriela Ciolek for helpful comments on an earlier version of the paper.

\bibliographystyle{chicago}
\bibliography{concentration_9.bbl}

\end{document}